\newfont{\cyrr}{wncyr10}
\newtheorem{thm}{Theorem}
\newtheorem{lem}{Lemma}
\newtheorem{cor}{Corollary}
\newtheorem{defi}{Definition}
\newcommand{\thmref}[1]{Theorem~\ref{#1}}
\newcommand{\lemref}[1]{Lemma~\ref{#1}}
\newcommand{\mbb}{\mathbb}
\newcommand{\psmb}{\left( \begin{smallmatrix}}
\newcommand{\psme}{ \end{smallmatrix} \right) }
\thanks{\textit{Acknowledgment:}  The first author is supported by INSA Senior Scientist award.  The second author is supported by NBHM postdoctoral fellowship with grant number 0204/19/2017/R\& D-II/10383 at IISc Bangalore. }
\begin{document}

\title[Extension of Laguerre polynomials with negative arguments]{Extension of Laguerre polynomials with negative arguments}

\author{T. N. Shorey and Sneh Bala Sinha}

\address{T. N. Shorey \hfill Sneh Bala Sinha \newline
NIAS, Bangalore,   \hfill IISc Bangalore, \newline
560012, India.  \hfill   Department of Mathematics, \newline
$\phantom{mmmmmmmmmm}$ \hfill 560012, India. }

\email{shorey@math.iitb.ac.in} \hfill \email{24.sneh@gmail.com  ;
snehasinha@iisc.ac.in} 
\iffalse
\address{  Sneh Bala Sinha \\$\phantom{mmmmmmmmmm
mmmmmmmmmmmmmmmmmmmmmmmmmmmmmmmmmmm}$
IISc Bangalore, Department of Mathematics\\  $\phantom{mmmmmmmmmm
mmmmmmmmmmmmmmmmmmmmmmmmmmmmmmmmmmm}$
cv raman road, 560012, India.}
\email{24.sneh@gmail.com ;
snehasinha@iisc.ac.in}
\email{}\fi

%\subjclass{11F03, 11J81, 11J91}
\keywords{}

\subjclass{11A41, 11B25, 11N05, 11N13, 11C08, 11Z05}

\keywords{Generalised Laguerre polynomials, Irreducibility, Primes, Valuations.}

\maketitle   

\begin{abstract}
We consider the irreducibility of polynomial $L_n^{(\alpha)} (x) $ where $\alpha$ is a negative integer. We observe that the constant term of $L_n^{(\alpha)} (x) $ vanishes if and only if $n \geq |\alpha| = -\alpha$. Therefore we assume that $\alpha = -n-s-1$ where $s$ is a non-negative integer. Let 
$$
g(x) = (-1)^n L_n^{(-n-s-1)}(x) = \sum\limits_{j=0}^{n}  a_j \frac{x^j}{j!} 
$$
and more general polynomial, let
$$
G(x)   =  \sum\limits_{j=0}^{n}  a_j b_j \frac{x^j}{j!} 
$$
where $b_j$ with $0 \leq j \leq n$ are integers such that $|b_0| = |b_n| = 1$. Schur was the first to prove the irreducibility of $g(x)$ for $s=0$. It has been proved that $g(x)$ is irreducibile for $0 \leq s \leq 60$. In this paper, by a different method, we prove : Apart from finitely many explicitely given posibilities, either $G(x)$ is irreducible or $G(x)$ is linear factor times irreducible polynomial. This is a consequence of the estimate $s > 1.9 k$ whenever $G(x)$ has a factor of degree $k \geq 2$ and $(n,k,s) \neq (10,5,4)$. This sharpens earlier estimates of Shorey and Tijdeman and Nair and Shorey. 
\end{abstract}

\section{\bf Introduction}
For $\alpha \in \mbb R$ and $ n \in \mbb N$,  the generalised Laguerre polynomial of degree $n$ is
\begin{equation}\label{eq_ln}
L_n^{(\alpha)} (x) = \sum\limits_{j=0}^{n} \frac{(\alpha+n) \dots (\alpha+j+1)}{ (n-j)!}\frac{(-x)^j}{j!}.
\end{equation}
These polynomials have many applications in several directions. Schur \cite{Sch} was the first to study algebraic properties of these polynomials. He gave a formula for their discriminant. He studied whether they are irreducible. Further he determined their Galois group when they are irreducible. By irreducibility of a polynomial, we shall always mean its irreducibility over the field of rational numbers. We observe that if $f$ has a factor of degree $k < n$, then $f$ has a factor of degree $n-k$. Therefore given a polynomial of degree $n$, we always consider its factor of degree $k$ with $1 \leq k \leq \frac{n}{2}$. For an account of results on the irreducibility of Laguerre polynomials, we refer to \cite{NaSh2}.

If argument $\alpha$ is a negative integer, we see that the constant term of $L_n^{(\alpha)} (x)$ vanishes if and only if $n \geq |\alpha| = -\alpha$ and then $L_n^{(\alpha)} (x)$ is reducible. Therefore we assume that $\alpha \leq -n-1$. We write $\alpha = -n-s-1$ where $s$ is a non-negative integer.  We have
\begin{equation*}
\sum\limits_{j=0}^{n} (-1)^n \frac{(n+s-j)!}{(n-j)! s!} \frac{x^j}{j!}  = L_n^{(-n-s-1)}(x).
\end{equation*}
We consider the following polynomial
\begin{align}
g(x) := g(x, n, s) = (-1)^n L_n^{(-n-s-1)}(x) & = 
\sum\limits_{j=0}^{n} \frac{(n+s-j)!}{(n-j)! s!} \frac{x^j}{j!} \nonumber\\
& = \sum\limits_{j=0}^{n}  {n+s -j \choose n-j } \frac{x^j}{j!} \nonumber\\
& =\sum\limits_{j=0}^{n}  a_j \frac{x^j}{j!} \label{new_eq_1}  
\end{align}
where $ a_j= {n+s -j \choose n-j }$ for $ 0 \leq j \leq n$.
Thus $a_n = 1$ and $  a_0 = {n+s \choose n} = \frac{(n+1) \dots (n+s)}{s!}$.
We observe that $g(x)$ is irreducible if and only if $L_n^{(-n-s-1)}(x)$ is irreducible.
We discuss the irreducibility of $g(x)$ and also of more general polynomial 
\begin{equation}\label{eq_G}
G(x) :=G(x,n,s )  =  \sum\limits_{j=0}^{n}  \pi_j \frac{x^j}{j!}  \qquad \text{ where} \quad \pi_j = b_ja_j
\end{equation}
such that $b_j \in \mbb Z$ for $0 \leq j \leq n$ with $|b_0| =1$, $|b_n| = 1$. Note that 
$|\pi_0 | = |a_0|$ and $|\pi_n| =1$. We observe that $G(x) = G(x,n,s)= g(x)$ if $b_j = 1$ for $0 \leq j \leq n$. We say that we have $(n,k,s)$ if $G(x) = G(x,n,s)$ has a factor of degree $k$ and we do not have $(n,k,s)$ if $G(x)$ has no factor of degree $k$. We write $g_1(x) = n! g(x)$ and $G_1(x) = n! G(x)$ so that $g_1$ and $G_1$ are monic polynomials with integer coefficients of degree $n$. The irreducibility of $g(x)$ and $ g_1(x)$ and also of $G(x)$ and $G_1(x)$ are equivalent. 

The irreducibilty of $g(x)$ was proved first by Schur \cite{Sch} for $s=0$, by Hajir \cite{Haj} for $s =1$, by Sell \cite{Sell} for $s =2$ and by Hajir \cite{Haj1} for $3 \leq s \leq 8$ and by Nair and Shorey \cite{NaSh1} for $9 \leq s \leq 22$. Further, developing on the work of Hajir \cite{Haj1} and Nair and Shorey \cite{NaSh1}, Jindal, Laishram and Sarma \cite{JiLaSa} extended the result of Nair and Shorey to $23 \leq s \leq 60$. Moreover, Nair and Shorey \cite{NaSh1}
proved the following theorem.

\begin{thm}\label{thm_nash}
Let $2 \leq k \leq n/2$. Let $g(x)$ be given by \eqref{new_eq_1}  with $0 \leq s \leq 1.63 k $. Then $g(x)$ has no factor of degree $k$.
\end{thm}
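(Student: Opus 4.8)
The plan is to pass to the monic integer polynomial $g_1(x) = n!\,g(x) = \sum_{j=0}^{n} C_j x^j$ and to analyse its $p$-adic Newton polygons. A direct computation rewrites the coefficients in the convenient closed form
$$
C_j = a_j\frac{n!}{j!} = \binom{n}{j}\,(s+1)(s+2)\cdots(s+n-j),
$$
so that $C_n = 1$ while the constant term $C_0 = (s+1)(s+2)\cdots(s+n)$ is a product of $n$ consecutive integers. By Gauss's Lemma a factor of $g$ of degree $k$ yields a monic factor $\phi \in \mathbb{Z}[x]$ of $g_1$ with $\deg\phi = k$, so it suffices to rule this out for every admissible triple.

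First I would record the blocking mechanism coming from Dumas's theorem on Newton polygons. For a prime $p$ one has $v_p(C_j) = v_p\binom{n}{j} + v_p\!\big(\tfrac{(n+s-j)!}{s!}\big)$, and the Newton polygon $\mathrm{NP}_p(g_1)$ is the lower convex hull of the points $(j, v_p(C_j))$. Suppose $\mathrm{NP}_p(g_1)$ consists of a single non-horizontal edge running from $(0,t)$ down to $(L,0)$, followed by the horizontal edge to $(n,0)$. The slope $-t/L$ has denominator $e = L/\gcd(t,L)$, and Dumas forces the horizontal length that $\phi$ contributes to this edge to be a multiple of $e$. Hence if $e > k$ then $\phi$ takes nothing from the slanted edge, so all $k$ of its roots lie over the flat part and $k \le n-L$ is required. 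Consequently, if moreover $L > n-k$ we obtain a contradiction; under these two hypotheses $g_1$ has no factor of degree $k$.

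This criterion is easiest to trigger with primes $p > n$, where $v_p\binom{n}{j} = 0$ and at most one of $s+1,\dots,s+n$ is divisible by $p$. If $p \,\|\, (s+i_0)$ with $1 \le i_0 \le k$, then $\mathrm{NP}_p(g_1)$ has exactly the shape above with $t=1$ and $L = n-i_0+1$; here $e = L \ge n-k+1 > k$ (using $k\le n/2$) and $L > n-k$, so the criterion applies and no factor of degree $k$ exists. Since $s \le 1.63k$ and $k \le n/2$ force $s+i_0 < 2n$, such a prime factor $>n$ is present precisely when one of $s+1,\dots,s+k$ is itself a prime exceeding $n$ (its cofactor being $1$). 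As these candidates lie in the short interval $(n,\,s+k]\subseteq (n,\,1.315\,n]$, their primality is governed by estimates on primes in short intervals, which dispatch the range $s+k>n$, that is, roughly $k > n/2.63$.

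The hard part is the complementary range of small $k$, where $s+k \le n$ and no prime exceeding $n$ is available, so one must work with primes $p \le n$. There $v_p(C_j) = v_p\binom{n}{j} + v_p\!\big(\tfrac{(n+s-j)!}{s!}\big)$ is no longer a clean step function, since the Kummer term $v_p\binom{n}{j}$ oscillates, and the whole difficulty is to locate, for every admissible $(n,k,s)$ with $s \le 1.63k$, a prime producing a Newton edge whose denominator exceeds $k$ and whose horizontal length exceeds $n-k$. I would carry this out by combining Sylvester--Erd\H{o}s type results on prime factors of products of consecutive integers (to force $(s+1)\cdots(s+n)$, and suitable sub-products, to carry a prime to the first power in the required position) with explicit estimates on the distribution of primes; the explicit constant $1.63$ then emerges from optimizing these estimates. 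The degenerate configurations where $v_p(C_0)$ is forced to be large, together with finitely many small $n$, would be cleared by a separate direct verification.
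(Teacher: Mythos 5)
There is a genuine gap, and it sits exactly where the theorem lives. Your ``easy'' case via primes $p>n$ is sound as far as it goes: for such $p$ the coefficient $C_j=\binom{n}{j}(s+1)\cdots(s+n-j)$ has $v_p\binom{n}{j}=0$, the polygon is a single slanted edge of height $1$ plus a flat part, and the Dumas lattice-point argument correctly excludes a factor of degree $k\le n/2$ when $p=s+i_0$ with $i_0\le k$. But this configuration exists only when some element of $\{s+1,\dots,s+k\}$ is a prime exceeding $n$, and your claim that short-interval prime estimates ``dispatch the range $s+k>n$'' is false as stated: the relevant interval is $(n,\,s+k]$, whose length $s+k-n$ can equal $1$ (take $n=114$, $k=44$, $s=71\le 1.63k$; then the interval is $(114,115]$ and $115=5\cdot 23$). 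No density estimate produces a prime in an interval of bounded length, so even the range $s+k>n$ is not closed, and the range $s+k\le n$ --- which is most of the parameter space --- is not touched by this device at all.

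For that complementary range you offer only a plan (``combine Sylvester--Erd\H{o}s type results with explicit prime estimates and optimize''), and the plan omits the one mechanism that actually generates an inequality of the shape $s>ck$. In this paper (and in the Nair--Shorey and Shorey--Tijdeman papers it builds on) the engine is \lemref{lem_1}: if $G$ has a factor of degree $k$ and a prime $p>k$ divides $n(n-1)\cdots(n-k+1)$, then $p$ divides $a_0a_n$, i.e.\ $p\mid\binom{n+s}{s}$; if moreover $p>s$ this forces $p\mid(n-i)$ and $p\mid(n+j)$ with $0\le i\le k-1$, $1\le j\le s$, hence $p\le i+j<k+s$. Played against the lower bound $P(n(n-1)\cdots(n-k+1))>4.42k$ (\lemref{new_lem_1}) this gives $s>3.42k$ outright for $n>4k$, and for $2k\le n\le 4k$ one instead uses a prime in the short interval $((1-\phi)n,\,n)$ \emph{below} $n$ (\lemref{lem_ShSi}); Newton polygons (Filaseta's \lemref{lem_3}) enter only to kill finitely many sporadic triples. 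Note also that the paper does not reprove \thmref{thm_nash}: it quotes it from Nair--Shorey and instead derives the stronger \corref{cor_shsi2} (with $1.9k$ in place of $1.63k$) from \thmref{thm_ShSi} by the route just described, precisely because a pure Newton-polygon argument does not survive the passage from $g$ to $G$. Your proposal would need either to import an equivalent of \lemref{lem_1}, or to carry out in full the $p\le n$ polygon analysis (where $v_p\binom{n}{j}$ oscillates); as written, the constant $1.63$ is never actually produced.
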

The proof of results in \cite{JiLaSa} depends on \thmref{thm_nash}. Further the weaker version of the above theorem was proved by Shorey and Tijdeman \cite{TijSh1} for $0 \leq s \leq 0.95k$. We generalise \thmref{thm_nash} as follows.

\begin{thm}\label{thm_ShSi}
\begin{itemize}

\item[(a)]
Let $2 \leq k \leq n/2$. Assume that $G(x)$ has a factor of degree $k \geq 2$. Then 
$s > 1.9k $ unless $(n, k, s ) \in \{ (10,5,4)\}$.

\item[(b)] Assume that the assumptions of \thmref{thm_ShSi}(a) are satisfied. Then
$$
s  >  1.99 k \quad \text{if } \quad n \geq 48732
$$
and for $0 < \phi < \frac{1}{9}$ there exists sufficiently large number $N$ depending only on $\phi$ such that 
\begin{equation}\label{eq_new_s}
s > 2(1- 2 \phi )k\hspace*{.5 cm} \text{for} \hspace*{.5 cm} n \geq N.
\end{equation}
\end{itemize}
 \end{thm}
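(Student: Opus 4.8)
The plan is to argue by contradiction, turning the existence of a degree-$k$ factor into a $p$-adic valuation constraint that can only be met when $s$ is large relative to $k$, and then to extract the numerical constant from explicit prime-counting estimates. First I would pass to a monic integral model. Writing $G_1(x)=n!\,G(x)\in\mathbb Z[x]$ and using the identity $\frac{n!}{j!}\binom{n+s-j}{n-j}=\binom{n}{j}\prod_{l=1}^{n-j}(s+l)$, one has
\begin{equation*}
G_1(x)=\sum_{i=0}^{n} b_{n-i}\binom{n}{i}\Big(\prod_{l=1}^{i}(s+l)\Big)x^{n-i},
\end{equation*}
so that the coefficient of $x^{n}$ is $b_n=\pm1$ and the constant term is $b_0\,\frac{(n+s)!}{s!}=\pm\frac{(n+s)!}{s!}$. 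By Gauss's lemma a factor of degree $k$ yields $G_1=\Phi\Psi$ with $\Phi,\Psi\in\mathbb Z[x]$ monic and $\deg\Phi=k$. The structural point that lets $G$ be handled exactly like $g$ is this: $|b_0|=|b_n|=1$ pins the two extreme coefficients as for $g$, while $b_j\in\mathbb Z$ can only \emph{raise} the $p$-adic valuations of the interior coefficients. Since the obstruction below is driven by \emph{lower} bounds on those valuations, supplied by the factorial block $\prod_{l=1}^{i}(s+l)$ which $b_j$ cannot diminish, every bound we use survives the passage from $g$ to $G$.

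Next I would analyse the Newton polygon $\mathrm{NP}_p(G_1)$ prime by prime. For each prime $p$ the valuation of the coefficient of $x^{n-i}$ is controlled by $v_p\!\big(\prod_{l=1}^{i}(s+l)\big)$, i.e. by the count of multiples of $p,p^2,\dots$ in the interval $(s,s+i]$, together with $v_p\binom{n}{i}$. The short products attached to small $i$ only see primes $p\le s+k$, so the primes of the window $(s,s+k]$ shape the polygon near the leading coefficient; the long products attached to large $i$ first meet a prime $p\in(n,n+s]$ only at $i\ge p-s$, so those primes shape the polygon near the constant term. A factor $\Phi$ of degree $k$, being monic in $\mathbb Z[x]$, has $\prod(\text{roots})=\pm\Phi(0)\in\mathbb Z$, hence at every prime the roots it absorbs carry integral total valuation lying in a range forced by the two families of edges. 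Concretely, a window prime dividing a block to first order produces a Newton-polygon edge whose slope, in lowest terms, has a denominator that no degree-$k$ factor can accommodate, unless enough further window primes are present to realign it; the unavoidable appearance of a large prime factor in $(s+1)\cdots(s+k)=k!\binom{s+k}{k}$ is guaranteed by a Sylvester--Erd\H{o}s theorem once $s\ge k$.

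The inequality $s>1.9k$ then comes out of a counting argument: assuming $s\le 1.9k$, the two windows $(s,s+k]$ and $(n,n+s]$ are shown to be too sparse in primes to supply enough integral realignments, which contradicts the integrality constraints imposed by $\Phi$. Quantifying the deficit requires two-sided explicit estimates for $\pi(x)$ and for the largest prime factor of a product of consecutive integers (Rosser--Schoenfeld and Dusart). These hold unconditionally once $n$ and $k$ exceed explicit thresholds, so the region $s\le 1.9k$ collapses to a finite list of triples $(n,k,s)$, with the small-$s$ corner additionally covered by the already known results for $s\le 60$. Each surviving triple I would settle by directly inspecting the factorization (or the discriminant), the only genuine factorization that persists being $(n,k,s)=(10,5,4)$.

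For part (b) the same engine runs with sharper inputs. Replacing the Rosser--Schoenfeld bounds by effective prime-number-theorem estimates with error term, valid for $n$ beyond the stated threshold $48732$, tightens the counting and lifts the admissible constant from $1.9$ to $1.99$; letting the error term approach its asymptotic value, with a free parameter, yields for each fixed $\phi\in(0,\tfrac19)$ and all $n\ge N(\phi)$ the bound $s>2(1-2\phi)k$. The constant $2$ is the natural ceiling of the method, approached as the explicit estimates for $\pi(x)$ are replaced by their exact asymptotics. I expect the main obstacle to be exactly this quantitative bookkeeping: making the per-prime valuation counts uniform in $k$ and tight enough that the explicit prime bounds deliver the clean constant $1.9$, while at the same time controlling the finitely many intermediate cases so that no exceptional factorization beyond $(10,5,4)$ escapes the verification.
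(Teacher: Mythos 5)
Your general framing (pass to the monic model $G_1=n!\,G$, exploit $|b_0|=|b_n|=1$ so that the interior $b_j$ can only raise $p$-adic valuations, and let explicit prime estimates deliver the constant, with $2$ as the natural ceiling coming from $n\geq 2k$) is consistent with the paper, but the engine you describe for producing $s>1.9k$ is not the one that works, and as stated it would fail. The paper's mechanism is the Shorey--Tijdeman lemma: if $G$ has a factor of degree $k$ and a prime $p>k$ divides ${}_k(n)=n(n-1)\cdots(n-k+1)$, then $p$ divides $a_0a_n=\binom{n+s}{n}$; if moreover $p>s$, then $p$ must divide some $n+j$ with $1\leq j\leq s$ as well as some $n-i$ with $0\leq i<k$, forcing $p\leq i+j<s+k$. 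The inequality then comes from pairing this \emph{upper} bound with a \emph{lower} bound on a prime factor of ${}_k(n)$: for $n>4k$ the Nair--Shorey bound $P({}_k(n))>4.42k$ gives $s>3.42k$ outright, and for $2k\leq n\leq 4k$ an explicit short-interval Bertrand result produces a prime $p_0\in((1-\phi)n,n)$, whence $s>(1-2\phi)n\geq 2(1-2\phi)k$ (this is where $1.9$, $1.99$ and \eqref{eq_new_s} come from, with $\phi=\frac{1}{40},\frac{1}{1001}$). Your proposal instead locates the obstruction in the windows $(s,s+k]$ and $(n,n+s]$ and claims they are ``too sparse in primes'' when $s\leq 1.9k$; that is the wrong window (the decisive primes divide ${}_k(n)$, not $(s+1)\cdots(s+k)$) and the wrong direction (for $s\leq 1.9k$ those intervals are rich in primes by Bertrand, so no sparsity contradiction is available). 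Nothing in your Newton-polygon ``realignment'' paragraph substitutes for the coincidence argument above.

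The second genuine gap is your treatment of the surviving finite list: you propose to ``settle each triple by directly inspecting the factorization (or the discriminant)'' and to invoke the known irreducibility of $g$ for $s\leq 60$. Neither is available for $G$: for a fixed $(n,k,s)$ the theorem must exclude a degree-$k$ factor for \emph{every} choice of $b_1,\dots,b_{n-1}\in\mathbb Z$, so there is no single polynomial to factor, and irreducibility of $g$ says nothing about $G$. The paper handles these cases only with $b$-independent tools: Lemma~\ref{lem_1} (which constrains $a_0a_n$ and ignores the interior coefficients), Filaseta's Lemma~\ref{lem_3} (stated precisely for $\sum a_jb_jx^j$ with $|a_0|=|a_n|=1$), and Dumas' Lemma~\ref{lem_dumus}. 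This is also why $(10,5,4)$ must be carried as an exception for $G$ even though $L_{10}^{(-15)}(x)$ itself is irreducible. To repair the proposal you would need to replace both the sparsity heuristic by the ${}_k(n)$ coincidence argument and the direct-factorization step by the combined Lemma~\ref{lem_1}/Lemma~\ref{lem_3} verification.
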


Comparing with \thmref{thm_nash}, we observe that $g(x)$ has been replaced by more general polynomial $G(x)$ and inequality $0 \leq s \leq 1.63 k $ by $0 \leq s \leq 1.9 k$.  By checking $L_n^{(-n-s-1)}(x)$ with $(n,s) = \{(10,4) \}$  irreducible and putting $b_j =1$ for $0 \leq j \leq n$ in $G(x)$, we immediately have the following sharpening of \thmref{thm_nash}.

\begin{cor}\label{cor_shsi2}
$g(x)$ with $s \leq 1.9 k$ has no factor of degree $k \geq 2$. 
\end{cor}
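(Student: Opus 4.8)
The plan is to obtain the corollary as a direct specialization of \thmref{thm_ShSi}(a), followed by the elimination of its single exceptional triple through an explicit computation. Working under the convention recalled in the introduction, I take $2 \leq k \leq n/2$ and argue by contradiction. Suppose $g(x) = g(x,n,s)$ has a factor of degree $k \geq 2$ with $s \leq 1.9k$. Choosing $b_j = 1$ for all $0 \leq j \leq n$ in \eqref{eq_G} satisfies $|b_0| = |b_n| = 1$ and gives $G(x) = g(x)$, so $G(x)$ has a factor of degree $k \geq 2$. By \thmref{thm_ShSi}(a) we then have either $s > 1.9k$ or $(n,k,s) = (10,5,4)$.

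Since $s \leq 1.9k$, the inequality $s > 1.9k$ is impossible, so it remains to rule out the exceptional case $(n,k,s) = (10,5,4)$. Here $(n,s) = (10,4)$, and from \eqref{new_eq_1} we have $g(x,10,4) = (-1)^{10} L_{10}^{(-15)}(x) = L_{10}^{(-15)}(x)$. Because the irreducibility of $g(x)$ is equivalent to that of $L_n^{(-n-s-1)}(x)$ and of the monic integer polynomial $g_1(x) = 10!\,g(x)$, I would verify by a single finite computation that $g_1(x)$ is irreducible over $\mbb Q$ (for instance by factoring it directly or by reduction modulo a small prime). An irreducible polynomial of degree $10$ has no factor of degree $5$, contradicting the assumed factor of degree $k = 5$. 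This disposes of the exceptional triple and completes the proof.

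I expect no genuine obstacle here: the entire analytic content is already carried by \thmref{thm_ShSi}(a), and the corollary merely records its specialization to $b_j = 1$. The only non-formal ingredient is the explicit irreducibility check for the degree-$10$ polynomial $L_{10}^{(-15)}(x)$, which is routine; this is precisely the computation the authors allude to when they say the result follows immediately.
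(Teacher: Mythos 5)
Your proposal is correct and follows exactly the paper's own route: the authors also obtain the corollary by setting $b_j = 1$ in $G(x)$ to reduce to \thmref{thm_ShSi}(a) and then disposing of the single exceptional triple $(10,5,4)$ by an explicit check that $L_{10}^{(-15)}(x)$ is irreducible. Nothing further is needed.
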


\begin{thm}\label{thm_ShSi2_new}
Assume that $G(x)$ has a factor of degree $k \geq 2$. Then $s > 92$ unless
\begin{align*}
(n,k,s) \in  \{& (4,2,7),(4,2,23), (9,2,19), (9,2,47),  (16,2,14),(16,2,34), (16,2,89), (9,3,47),\\ &  (16, 3, 19),   (10,5,4) \nonumber\}.
\end{align*} 
\end{thm}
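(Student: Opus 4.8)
The plan is to argue by contradiction: assume $G(x)$ has a factor of degree $k \ge 2$ with $s \le 92$, and deduce that $(n,k,s)$ is one of the listed triples. The first step cuts down the range of $k$. By \thmref{thm_ShSi}(a) we have $s > 1.9k$ unless $(n,k,s) = (10,5,4)$, which already appears on the list; so we may assume $s > 1.9k$, whence $k < 92/1.9 < 48.5$, i.e. $2 \le k \le 48$. Conversely, for $k \ge 49$ the same inequality gives $s > 1.9\cdot 49 = 93.1 > 92$, so such $k$ cannot occur under $s \le 92$. Thus the whole problem reduces to the finite band $2 \le k \le 48$, and for each such $k$ it remains to show that a factor of degree $k$ with $s \le 92$ forces $n$ to be small and $(n,k,s)$ to lie on the list.

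Second, I would return to the prime / Newton-polygon engine underlying the proof of \thmref{thm_ShSi}. Writing $G_1(x) = n!\,G(x) = \sum_{j=0}^{n} c_j x^j$, the coefficients are $c_j = \binom{n}{j}\bigl(\prod_{i=1}^{n-j}(s+i)\bigr)b_j$, and the two endpoints are pinned by $|b_0| = |b_n| = 1$: namely $c_0 = \pm (s+1)(s+2)\cdots(n+s) = \pm\, n!\,\binom{n+s}{s}$ and $c_n = \pm 1$. For each prime $p$ one has the one-sided bound $\nu_p(c_j) \ge \nu_p\bigl(\prod_{i=1}^{n-j}(s+i)\bigr)$, with equality forced at $j=0$. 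The heart of the method is to compare, via Dumas' theorem, the $p$-adic Newton polygons of $G_1$ with the horizontal lengths that a degree-$k$ factor can occupy, feeding in lower bounds of Sylvester--Erd\H{o}s type for the number and size of the prime factors of the constant term, equivalently of $a_0 = \binom{n+s}{s}$. This is precisely the counting that produces the inequality $s > 1.9k$; for \thmref{thm_ShSi2_new} it must be sharpened and, crucially, made to control $n$ as well.

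Third --- and this is the technical heart --- for each fixed $k \in \{2,\dots,48\}$ and each $s \le 92$ I would convert the prime count into an explicit upper bound $n \le N_0(k,s)$. The mechanism is that, as $n$ grows with $s$ and $k$ held fixed, $c_0$ acquires prime factors too large or too numerous to be distributed between a factor of degree $k$ and its cofactor consistently with Dumas' theorem, so beyond an effective threshold no degree-$k$ factor can exist and $n$ is bounded. A delicate point is that the interior coefficients $b_j$ with $0 < j < n$ are arbitrary integers that can only raise $p$-adic valuations; hence every inequality must be driven by the pinned endpoints and by one-sided lower bounds on $\nu_p(c_j)$, never by exact interior values. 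Making the thresholds $N_0(k,s)$ small enough that only genuinely exceptional $n$ survive (all listed exceptions have $n \le 16$) demands that the underlying explicit prime estimates be essentially optimal, and I expect this uniform, fully explicit bounding of $n$ to be the main obstacle. The sharper constant of \thmref{thm_ShSi}(b) disposes of the larger values of $k$ once $n \ge 48732$, but it does not by itself settle the small-$k$ cases, so the explicit bound on $n$ above is unavoidable there.

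Finally, with $2 \le k \le 48$, $k \le n/2$, $s \le 92$ and $n \le N_0(k,s)$, only finitely many triples $(n,k,s)$ remain. I would eliminate them by re-applying the same valuation criteria and, for the smallest $n$, by direct inspection using the known irreducibility data as in \cite{JiLaSa,NaSh1}, verifying that the survivors are exactly the ten triples listed. This yields \thmref{thm_ShSi2_new}.
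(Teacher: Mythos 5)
Your opening reduction is the same as the paper's: \thmref{thm_ShSi}(a) gives $s>1.9k$ outside the listed exception $(10,5,4)$, hence $k\le 48$. After that, however, your step 3 --- which you rightly identify as the technical heart --- rests on a mechanism that does not work and is not the one the paper uses. You propose to bound $n$ for fixed $(k,s)$ by arguing that the constant term $c_0=\pm\,n!\binom{n+s}{s}$ acquires prime factors ``too large or too numerous'' to be compatible with a degree-$k$ factor via Dumas' theorem. But the operative obstruction in this paper is not about the prime factors of $c_0$: it is \lemref{lem_1}, which says that any prime $p>k$ dividing $n(n-1)\cdots(n-k+1)$ must divide $a_0a_n=\pm\binom{n+s}{s}$, hence (for $p>s$) must divide some $n-i$ and some $n+j$ with $0\le i\le k-1$, $1\le j\le s$, forcing $p\le i+j<s+k\le 100$. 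So a degree-$k$ factor can survive only if $P({}_k(n))<s+k$, a condition on the prime factors of $k$ \emph{consecutive integers near $n$}, not of the constant term, and as $n$ grows this condition does not fail for any soft counting reason. Its finitely many solutions are only known through hard computations of St\"ormer type: for $k=2$ the paper needs the complete Luca--Najman table of odd $x$ with $P(x^2-1)<100$ (\lemref{lem_new_4} and the sets $D_k$ in \lemref{lem_n<100}), whose entries reach $n=332110803172167361$ and must then be eliminated one by one by extra valuation arguments such as \lemref{lem_s>p^r}. Your expectation that explicit thresholds $N_0(k,s)$ can be driven down to the size of the listed exceptions ($n\le 16$) is therefore unrealistic; the surviving $n$ are not small, they are merely finite in number, and the paper itself stresses that the bound $92$ is dictated by the reach of Najman's tables.

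The route you would need to reproduce is: (i) \lemref{lem_s>9} settles $s\le 9$; (ii) for $n>4k$ with $(n,k)\notin T$, the Nair--Shorey bound $P({}_k(n))>4.42k$ (\lemref{new_lem_1}) combined with \lemref{lem_1} forces $3.42k<s\le 92$, hence $k\le 26$, and then $P({}_k(n))<100$ forces $k\le 9$ by \lemref{lem_new_4} and reduces the problem to the explicit lists in \lemref{lem_n<100}; (iii) the range $2k\le n\le 4k$ is handled by \lemref{lem2k_le_n_le_4k} using prime-gap results (\lemref{lem_prime}, \lemref{lem_ShSi}); Filaseta's lemma (\lemref{lem_3}) and Dumas' theorem enter only to dispose of the finitely many residual triples where \lemref{lem_1} is inconclusive. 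Your proposal omits the two external inputs that make the finiteness effective --- the lower bounds for $P({}_k(n))$ and the Luca--Najman tables --- and these cannot be replaced by the endpoint-pinned Newton-polygon estimates you describe.
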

Thus, except for an explicitly given finite set of $(n,k,s)$ in \thmref{thm_ShSi2_new}, we see that $G(x)$ is either irreducible or linear factor times of an irreducible polynomial whenever $s \leq 92$.
Schur proved the irreducibility of $G(x)$ when $s=0$. Thus $\sum\limits_{j=0}^{n}b_j \frac{x^j}{j!}$ is irreducible. Therefore we always assume that $s > 0$ in this paper. \thmref{thm_ShSi2_new} may be viewed as an extension of Schur result.  Like $g(x)$, the results of Hajir on Newton polygons are not applicable for $G(x)$ as the coefficients $\pi_j$'s are not fixed. Therefore the proofs of our theorems are different from the proofs given in \cite{Haj1}, \cite{NaSh1} and \cite{JiLaSa}. Infact they follow the lines for the proofs of the results in \cite{TijSh1}. Assume that $G(x)$ has a factor of degree $k \geq 2$. First we show that $s > 9$ unless
\begin{equation}\label{new_eq_s>9}
(n,k,s) \in \{ (4,2,7), (10, 5, 4) \},
\end{equation}
see \lemref{lem_s>9}. If $s  \leq 1.9k$ and $(n,k,s) \neq (10,5,4)$, we see that $k \geq 6$ which we show is not possible by \lemref{lem_1} and \lemref{lem_3}. Therefore $s > 1.9 k$ as in \thmref{thm_ShSi}. Further we derive from $s > 1.9k$ that $s > 92$ as in \thmref{thm_ShSi2_new}. The proofs depend on combining \lemref{lem_1} and \lemref{lem_3} at several instances as explained completely in the proof of \lemref{lem_s>9_n>4k}. Also several results on lower bounds for the greatest prime factor of consecutive integers in arithmetic progression have been applied. The limit $92$ for $s$ in \thmref{thm_ShSi2_new} is optimal in the sense that the proofs depend on the Tables of Najman \cite{LN} and we should enlarge the Tables for relaxing the limit of $s$ in \thmref{thm_ShSi2_new}. This will be computationally very difficult. The results proved in this paper are general. \thmref{thm_ShSi2_new} is derived from  \thmref{thm_ShSi} which is general.

Suppose that $G(x)$ has a factor of degree $1$ and $s \geq 2$. Then, by \lemref{lem_1},  $p$ divides $\frac{(n+1) \dots (n+s)}{s!}$ for every prime $p$ dividing $n$. We have

\begin{lem}\label{lem_k_eq_1}
Assume that $G(x) $ has a factor of degree $1$. Then 
$$
n \leq s^{\pi(s)} \leq e^{s\Big( 1+ \frac{1.2762}{\log s} \Big)} \quad \text{for} \quad s \geq 2.
$$
\end{lem}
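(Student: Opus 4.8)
The plan is to combine the divisibility consequence recorded just before the statement with Legendre's formula and an explicit estimate for the prime counting function. By the discussion preceding the lemma, the hypothesis that $G(x)$ has a linear factor together with $s \geq 2$ yields, via \lemref{lem_1}, that every prime $p$ dividing $n$ also divides $a_0 = {n+s \choose n} = \frac{(n+1)\cdots(n+s)}{s!}$. The heart of the argument is to convert this into the bound $p^{v_p(n)} \leq s$ for each such prime, where $v_p$ denotes the $p$-adic valuation.

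First I would fix a prime $p \mid n$, set $a = v_p(n)$, and argue by contradiction assuming $p^a > s$. Writing $a_0 = (n+s)!/(n!\,s!)$ and applying Legendre's formula,
$$
v_p(a_0) = \sum_{i \geq 1}\left( \floor{\tfrac{n+s}{p^i}} - \floor{\tfrac{n}{p^i}} - \floor{\tfrac{s}{p^i}} \right),
$$
I would note that for $1 \leq i \leq a$ the divisibility $p^i \mid n$ makes $\floor{(n+s)/p^i} - \floor{n/p^i} = \floor{s/p^i}$, so those terms vanish. For $i > a$ the decisive point is that, since $n$ is itself a multiple of $p^a$ and $p^a > s$, the smallest multiple of $p^a$ exceeding $n$ is $n + p^a > n+s$; hence no element of $\{n+1, \ldots, n+s\}$ is divisible by $p^a$, and \emph{a fortiori} none is divisible by any higher power $p^i$. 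Consequently $\floor{(n+s)/p^i} = \floor{n/p^i}$ while $\floor{s/p^i} = 0$ for every $i \geq a$, so all remaining terms vanish and $v_p(a_0) = 0$. This contradicts $p \mid a_0$, and therefore $p^{v_p(n)} \leq s$.

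With this in hand, every prime dividing $n$ is at most $s$, so $n$ has at most $\pi(s)$ distinct prime divisors; writing $n = \prod_{p \mid n} p^{v_p(n)}$ and bounding each factor by $s$ gives $n \leq s^{\pi(s)}$, which is the first inequality. For the second inequality I would take logarithms, reducing the claim to $\pi(s)\log s \leq s\big(1 + \tfrac{1.2762}{\log s}\big)$, which is exactly the explicit upper bound $\pi(s) \leq \frac{s}{\log s}\big(1 + \frac{1.2762}{\log s}\big)$ of Dusart multiplied through by $\log s$; I would cite this estimate and, if needed, check the few smallest values of $s$ directly, where the bound is very loose.

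The main obstacle is the valuation computation: one must argue cleanly that $p^a > s$ forces every term of Legendre's formula to vanish, the key observation being that $p^a \mid n$ together with $p^a > s$ prevents any of the $s$ consecutive integers $n+1, \ldots, n+s$ from being divisible by $p^a$. Everything else is either the quoted divisibility from \lemref{lem_1} or a citation to an explicit form of the prime number theorem.
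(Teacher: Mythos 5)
Your proposal is correct and follows essentially the same route as the paper: Lemma~\ref{lem_1} forces every prime $p\mid n$ to divide $a_0$, one shows $p^{v_p(n)}\leq s$ by a valuation argument, and then $n\leq s^{\pi(s)}$ with Dusart's explicit bound on $\pi(s)$ finishing the second inequality. The only difference is that you spell out the Legendre-formula computation showing $v_p(a_0)=0$ when $p^{v_p(n)}>s$, which the paper compresses into the single line $v_p\bigl(\tfrac{(n+1)\cdots(n+s)}{s!}\bigr)\leq v_p\bigl(\tfrac{s!}{s!}\bigr)\leq 0$.
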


\begin{proof}
By \lemref{lem_1}, we have 
\begin{equation}
p^{v_p(n)} \leq s
\end{equation}
otherwise
$$
v_p\Big( \frac{(n+1) \dots (n+s)}{s!} \Big) \leq v_p \Big(\frac{s!}{s!}  \Big) \leq 0.
$$
This is a contradiction. Therefore 
$$ n=\prod_{p|n} p^{v_p(n)} \leq \prod_{p|n} p^{\big( \frac{\log s}{\log p} \big)} = s^{\pi(s)}. $$
Hence $n \leq s^{s\Big( 1+ \frac{1.2762}{\log s} \Big)} $ for $ s \geq 2$.
\end{proof}

The computations in this paper are carried out by SAGE.

\section{\bf{Lemmas}}
We always use $p$ for a prime and $n, s$ for integers with $n \geq 1$ and  $s \geq 0$   unless otherwise specified. We denote $x(x-1)\dots (x-k+1) = {}_k(x)$. In this section, we state lemmas required for the proof of our theorems.

The following lemma is proved by Shorey and Tijdeman \cite[Lemma 4.2]{TijSh1} in 2010.

\begin{lem}\label{lem_1}
Let $a_0, \dots, a_n$ be arbitrary integers and $h(x)= \sum\limits_{j=0}^{n}  a_j \frac{x^j}{j!}  $. Assume that $h(x) $ has a factor of degree $k \geq 1$. Suppose that there exists a prime $p > k+a$ such that $p$ divides $_{k}(n+a)$. Then $p$ divides $a_0a_n$. 
\end{lem}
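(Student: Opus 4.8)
The plan is to pass to the monic integer polynomial $H(x)=n!\,h(x)=\sum_{j=0}^n c_j x^j$ with $c_j=a_j\,n!/j!$ and $c_n=a_n$; a factor of $h$ of degree $k$ gives one of $H$ (up to constants). I would argue $p$-adically by contradiction, assuming $p\nmid a_0$ and $p\nmid a_n$, since if either fails we are done. Because $p\nmid a_n$, the polynomial $H/a_n$ is monic in $\mbb Z_p[x]$, so every root of $H$ lies in $\overline{\mbb Z}_p$ and has non-negative $p$-adic valuation. Since $p>k+a\ge k$, the $k$ consecutive integers $n+a,n+a-1,\dots,n+a-k+1$ contain a unique multiple $M$ of $p$; I would treat the substantive case $M\le n$, where $n-M\le k-1<p$ forces $M$ to be the largest multiple of $p$ not exceeding $n$ and yields $n-M\le k-1$.

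First I would record the coefficient valuations. For every $j$ one has $v_p(c_j)\ge v_p(n!/j!)=\sum_{i\ge 1}(\floor{n/p^i}-\floor{j/p^i})$, with equality at $j=0$ (value $v_p(n!)$) and at $j=n$ (value $0$) thanks to $p\nmid a_0a_n$; moreover for $0\le j\le M-1$ the factor $M$ divides $n!/j!$, so $p\mid c_j$. Let $P$ denote the lower convex hull of the points $(j,v_p(n!/j!))$. Since raising the plotted points can only raise the hull, the Newton polygon $\mathrm{NP}(H)$ lies on or above $P$, and the two share the left endpoint $(0,v_p(n!))$. The initial (steepest) slope of $P$ is $-\max_{1\le j\le n} v_p(j!)/j$; using Legendre's formula $v_p(j!)=(j-s_p(j))/(p-1)$ with base-$p$ digit sum $s_p(j)\ge 1$, this maximum is strictly less than $1/(p-1)$. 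By convexity and the shared left endpoint, every slope of $\mathrm{NP}(H)$ is at least the initial slope of $P$, so every root of $H$ has valuation at most $\max_j v_p(j!)/j$, which is $<1/(p-1)$.

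Now I would convert this into a statement about factors. A root of valuation $v\in(0,1/(p-1))$ has $v=\ell/m$ in lowest terms with $m\ge p$, hence lies in a ramified extension of $\mbb Q_p$ of ramification index divisible by $m$; its $\mbb Q_p$-irreducible factor therefore has degree divisible by $m$, so $\ge p>k$. Consequently the degree-$k$ factor $\Phi$ of $H$ cannot contain any root of positive valuation, so all $k$ roots of $\Phi$ are $p$-adic units, i.e. have valuation $0$. But the number of valuation-$0$ roots of $H$ equals $n-j_0$, where $j_0$ is the least index with $p\nmid c_j$; since $p\mid c_j$ for $j<M$ we have $j_0\ge M$, leaving at most $n-M\le k-1<k$ roots of valuation $0$ — too few to supply the $k$ roots of $\Phi$. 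This contradiction would prove $p\mid a_0a_n$.

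The hard part will be the two quantitative inputs of the middle step: the \emph{strict} inequality $\max_j v_p(j!)/j<1/(p-1)$, which is exactly what forces every positive root valuation to have denominator at least $p$ (and not merely $p-1$), and the convexity/monotonicity comparison $\mathrm{NP}(H)\ge P$ with matching left endpoints. Equally delicate is the combinatorial bookkeeping that places the unique multiple $M$ inside $[1,n]$, guaranteed by $p>k+a$, since this is precisely what pins the count of unit roots strictly below $k$; the complementary positioning (when the unique multiple exceeds $n$) would have to be disposed of separately, most likely by applying the same argument to the reversed polynomial $x^n h(1/x)$.
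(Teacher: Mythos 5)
The paper gives no proof of this lemma --- it is quoted from Shorey--Tijdeman \cite{TijSh1}, and is invoked in this paper only with $a=0$ --- so there is nothing in-house to compare against; what you have written is, in its main case, a correct and complete reconstruction of the standard argument. For the configuration you call substantive (the multiple $M$ of $p$ among $n+a,\dots,n+a-k+1$ satisfies $M\le n$, which is automatic when $a=0$) every step checks: $v_p(c_j)\ge v_p(n!/j!)$ with equality at $j=0,n$ because $p\nmid a_0a_n$; $p\mid c_j$ for $j<M$ since $M$ occurs as a factor of $n!/j!$; the strict bound $v_p(j!)/j<1/(p-1)$ from Legendre; the hull comparison with shared left endpoint; the ramification argument showing a root of valuation in $(0,1/(p-1))$ sits in a $\mathbb{Q}_p$-irreducible factor of degree at least $p>k$; and the count of unit roots being $n-j_0\le n-M\le k-1<k$. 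This is exactly the Dumas--Filaseta Newton-polygon technology the paper uses elsewhere, phrased through root valuations and ramification indices rather than lattice points on edges; the two formulations are equivalent.

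The one genuine problem is the case you defer, where the unique multiple of $p$ exceeds $n$ (possible only for $a\ge1$). The reversal $x^nh(1/x)=\sum_j a_{n-j}x^j/(n-j)!$ is \emph{not} of the form $\sum_j b_jx^j/j!$, so ``the same argument'' does not transfer; and no patch can succeed, because the lemma as literally printed is false there. Take $n=2$, $h(x)=(x-1)^2=\tfrac{2}{2!}x^2-\tfrac{2}{1!}x+1$, so $(a_0,a_1,a_2)=(1,-2,2)$; with $k=1$, $a=1$, $p=3>k+a$ we have $p\mid{}_{1}(n+a)=3$ and $h$ has a linear factor, yet $a_0a_n=2$. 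The statement must therefore be read so that the designated multiple of $p$ lies in $\{n-k+1,\dots,n\}$ --- which is precisely what $a=0$ guarantees and is the only situation the paper ever uses. Under that reading your proof is complete; you should state the restriction and drop the deferred case rather than attempt the reversal.
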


We shall always use \lemref{lem_1} for $a=0$. In 2004, Laishram and Shorey  \cite[Theorem 1]{LaSh_new} proved 

\begin{lem}\label{new_lem_lash4}
If $k \geq 2$ and $n >k$, then 
$$
P(n(n+2)\dots (n+(k-1)2)) > 2k.
$$
\end{lem}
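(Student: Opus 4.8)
The plan is to establish this by the Sylvester--Erdős method adapted to the arithmetic progression $n, n+2, \dots, n+2(k-1)$ of common difference $2$. Writing $W = \prod_{i=0}^{k-1}(n+2i)$, I would argue by contradiction: assume every prime factor of $W$ is at most $2k$ and derive a contradiction from the size of $W$ when $n > k$. First I would split according to the parity of $n$. When $n$ is odd every term is odd, so $W$ is odd and only odd primes $p \le 2k$ can occur; when $n$ is even I would extract the factor $2^k$ by writing $W = 2^k \prod_{i=0}^{k-1}(\tfrac{n}{2} + i)$, reducing the odd part to a product of $k$ consecutive integers and reusing the bounds obtained in the odd case.

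The core is a two-sided estimate for $W$. For the upper bound, fix an odd prime $p \le 2k$; since $2$ is invertible modulo $p^m$, the congruence $p^m \mid n+2i$ pins down $i$ modulo $p^m$, so at most $\ceil{k/p^m}$ of the $k$ terms are divisible by $p^m$, and none once $p^m > n+2k-2$. Summing over $m$ gives $v_p(W) \le \floor{\log_p(n+2k-2)} + \tfrac{k}{p-1}$, hence $p^{v_p(W)} \le (n+2k-2)\,p^{k/(p-1)}$. Multiplying over all primes $p \le 2k$ and invoking Mertens' estimate $\sum_{p \le 2k}\tfrac{\log p}{p-1} = \log(2k)+O(1)$ yields $\log W \le \pi(2k)\log(n+2k) + k\log(2k) + O(k)$. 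For the lower bound I would simply use $W \ge n^{k}$, i.e. $\log W \ge k\log n$. Since $\pi(2k)\sim 2k/\log(2k) < k$ once $k$ is moderately large, the coefficient of $\log n$ on the right is strictly smaller than $k$, so the two bounds are incompatible as soon as $n$ exceeds an explicit bound depending only on $k$; this disposes of all large $n$.

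What remains, and where I expect the real work to be, is twofold. First, to pin the constant at exactly $2$ rather than the $1$ that the crude argument naturally produces, I would refine the treatment of the large primes $k < p \le 2k$: each such odd prime divides at most one of the $k$ terms, so the primes in $(k,2k]$ form a squarefree-like block whose careful accounting is precisely what sharpens the conclusion from $>k$ to $>2k$; this is also what must carry the small values $k=2,3,4$ for which $\pi(2k)=k$ and the crude estimate gains nothing. Second, the asymptotic argument only controls $n$ beyond a threshold $n \le C(k)$, so for each small $k$ one must verify the finitely many remaining pairs $(n,k)$ directly by computation, any genuinely exceptional pairs being small ones of the type already recorded among the exceptional triples in \thmref{thm_ShSi2_new}. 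The main obstacle is thus not the asymptotics but the simultaneous control of the exact constant through the large-prime block and the exhaustive finite check.
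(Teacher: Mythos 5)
You should first be aware that the paper contains no proof of this lemma: it is quoted, with a citation, as Theorem~1 of Laishram and Shorey (Indag.\ Math.\ 2004), so your sketch is to be measured against that external proof, not against anything in the present text. Against that standard there are two genuine gaps. The first is that your treatment of even $n$ cannot work because the statement, read literally without a coprimality hypothesis, is false there: for $n=4$, $k=2$ the product is $4\cdot 6=24$ and $P(24)=3\le 2k$ (likewise $n=16$, $k=2$). The lemma is true, and is only ever applied in this paper, for $\gcd(n,2)=1$ — it is invoked for the product $\Delta$ of the \emph{odd} terms of ${}_k(n)$. Your proposed extraction of $2^k$ reduces the even case to $k$ consecutive integers starting at $n/2$, but $n>k$ does not give $n/2>k$, so Sylvester's theorem does not apply, and indeed no argument can close this case. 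A correct write-up must add the oddness hypothesis rather than try to prove the even case.

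The second gap is in the claim that the Sylvester--Erd\H{o}s count plus ``a finite check'' finishes the proof. Your comparison of $k\log n$ with $\pi(2k)\log(n+2k)+k\log(2k)+O(k)$ only yields a contradiction for $n$ beyond a threshold $C(k)$ with $\log C(k)\ge \frac{k}{k-\pi(2k)}\bigl(\log(2k)+O(1)\bigr)$; this exceeds $k$ by an amount that grows with $k$ (and is vacuous for $k\in\{2,3,4\}$, where $\pi(2k)=k$). Since the lemma is asserted for all $k\ge 2$, the uncovered set $\{(n,k): k<n\le C(k)\}$ is infinite, so ``verify the finitely many remaining pairs for each small $k$'' does not dispose of the statement. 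Closing that middle range — by bounding the number of terms all of whose prime factors are at most $2k$, by the refined accounting of the primes in $(k,2k]$ that upgrades $>k$ to $>2k$, and by a structured computation — is precisely the substance of the Laishram--Shorey proof, and it is exactly the part your proposal defers. As it stands the plan covers only the easy range of $n$ and misidentifies the hard part as a finite computation.
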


In 2005, Laishram and Shorey  \cite[Theorem 1(a)]{LaSh} proved 

\begin{lem}\label{lem_max_n}
If $k \geq 2$ and $n \geq \max\Big(2k+13, \frac{541}{262}k \Big)$, then 
$$
P({}_k(n)) > 2k.
$$
\end{lem}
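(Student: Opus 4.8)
The plan is to argue by contradiction, working with the product of $k$ consecutive integers directly. Writing $\,{}_k(n) = n(n-1)\cdots(n-k+1) = k!\binom{n}{k}\,$ and noting that every prime factor of $k!$ is at most $k<2k$, one has $P({}_k(n)) = \max\big(P(k!),P(\binom{n}{k})\big)$, so it is equivalent to show that the product $\,{}_k(n)$ of the $k$ consecutive integers $n-k+1,\dots,n$ — each of which exceeds $k$, since $n-k+1 \geq k+14$ by the hypothesis $n\geq 2k+13$ — has a prime factor exceeding $2k$. I would therefore suppose, for contradiction, that $P({}_k(n)) \leq 2k$ and aim to contradict the size of the product.

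The engine is a Sylvester--Erd\H{o}s decomposition of the factors. For each $i$ with $0\leq i\leq k-1$ write $n-i = a_i b_i$, where $b_i$ is the $k$-smooth part of $n-i$ and $a_i$ collects the primes in $(k,2k]$ (all prime factors lie in this range by the contradiction hypothesis). Since every prime $p\in(k,2k]$ exceeds $k$, it divides at most one of the $k$ consecutive integers; hence distinct indices with $a_i>1$ consume disjoint primes, and at most $\pi(2k)-\pi(k)$ indices satisfy $a_i>1$. Consequently at least $k-\big(\pi(2k)-\pi(k)\big)$ of the integers in $(n-k,n]$ are fully $k$-smooth, and for these $a_i=1$, so the $b_i=n-i$ are distinct and each exceeds $n-k$. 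The contradiction should come from pitting the lower bound $\prod b_i \geq (n-k+1)^{\,k-(\pi(2k)-\pi(k))}$ against the upper bound $\prod_i b_i = \prod_{p\leq k} p^{v_p({}_k(n))}$, where the valuations $v_p({}_k(n))$ are controlled by Legendre's formula; pushing these through effective Chebyshev/Mertens estimates should produce a numerical inequality that fails precisely once $n\geq \tfrac{541}{262}k$.

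The two values in $\max\big(2k+13,\tfrac{541}{262}k\big)$ reflect a split: the ratio $\tfrac{541}{262}k$ dominates for $k\geq 201$ (handled by the asymptotic argument above), while for $2\leq k\leq 200$ the binding threshold is $2k+13$, where the asymptotic estimates are ineffective. For that range I would instead determine the finitely many $n$ for which $(n-k,n]$ consists entirely of $2k$-smooth integers — a feasible computation on runs of consecutive smooth numbers, since $2k\leq 400$ — and verify that all such $n$ satisfy $n<2k+13$, which simultaneously shows the threshold is best possible. The hard part will be that both constants $\tfrac{541}{262}$ and $2k+13$ are essentially optimal, so every estimate in the second paragraph must be carried out in sharp effective form: Rosser--Schoenfeld-type bounds for $\theta$ and $\pi$, together with the exact Sylvester--Erd\H{o}s count of which primes in $(k,2k]$ actually divide $\binom{n}{k}$. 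The crude bounds $\theta(2k)\lesssim 2k$ and $v_p({}_k(n))\leq \tfrac{k}{p-1}+\log_p n$ are far too lossy and would only yield a much larger admissible ratio, so the delicacy lies entirely in the numerical optimization rather than in the structure of the argument.
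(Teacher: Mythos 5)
First, a point of reference: the paper does not prove this lemma at all --- it is quoted verbatim as Theorem 1(a) of Laishram and Shorey (2005), so there is no internal proof to match your argument against. What you have written is the classical Sylvester--Erd\H{o}s strategy for such results, and the structural skeleton (factor each $n-i$ into its $k$-smooth part $b_i$ and its part $a_i$ supported on primes in $(k,2k]$; note each such prime divides at most one term, so at least $k-\bigl(\pi(2k)-\pi(k)\bigr)$ terms are $k$-smooth; compare $(n-k+1)^{k-(\pi(2k)-\pi(k))}$ against $\prod_{p\le k}p^{v_p({}_k(n))}$) is indeed the right starting point and is sound as far as it goes.

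The genuine gap is that the decisive step is never carried out, and in the regime where the lemma is actually sharp it cannot be carried out by the method as you describe it. With $n$ close to $2k$ (which is exactly where the constant $\tfrac{541}{262}\approx 2.065$ matters), the lower bound is roughly $k^{\,k-(\pi(2k)-\pi(k))}$ while the Legendre-type upper bound $\prod_{p\le k}p^{v_p({}_k(n))}\le k!\,n^{\pi(\sqrt{n})}\cdot(\text{smaller factors})$ is roughly $k^k e^{-k}(2k)^{\pi(k)}$; the required inequality reduces to $e^{k}>(2k)^{\pi(k)}k^{\pi(2k)-\pi(k)}$, which is \emph{false} for large $k$ since $(2k)^{\pi(k)}$ alone already exceeds $e^{k}$. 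You acknowledge that the crude bounds are ``far too lossy,'' but replacing them with Rosser--Schoenfeld estimates does not rescue the comparison --- the deficit is in the exponent, not in the constants. The actual Laishram--Shorey proof needs substantially more: sharper counts of the number of terms of ${}_k(n)$ divisible by a prime exceeding $k$ (building on Erd\H{o}s, Saradha--Shorey and related refinements), together with extensive case analysis and computation for small and moderate $k$. Your proposed treatment of $2\le k\le 200$ by enumerating all runs of $k$ consecutive $2k$-smooth integers is also not the routine check you suggest: certifying that such runs have been exhausted is itself a hard problem (of St\"ormer--Lehmer type for $k=2$, and worse for larger $k$), and in any case verifying that all exceptions satisfy $n<2k+13$ would establish the lemma but not, as you claim, that the threshold is best possible --- that requires exhibiting an exception at $n=2k+12$. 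In short: correct blueprint, but the quantitative heart of the result is missing, and the naive version of the blueprint provably fails exactly where the stated constants are needed.
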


In 2016, Nair and Shorey \cite[Theorem 1]{NaSh} proved the following sharpening of a result of Sylvester \cite{Syl} that a product of $k \geq 2$ consecutive positive integers each exceeding $k$ is divisible by a prime greater than $k$.
 \begin{lem}\label{new_lem_1}
$n > 4k$ and $k \geq 2$ be such that $(n,k) \not\in T$. Then 
$P({}_{k}(n) ) > 4.42 k$ and moreover  $P({}_{k}(n) )  > 4.5 k$ unless $k \not\in \{82, 83\}$. Here 
\begin{align*}
T = & \{ (9,2),(14,2),(15,2),(20,2),(24,2),(27,2),(35,2),(48,2),(49,2),(63,2),(80,2),(125,2),\\ & (224,2),(2400,2),(4374,2),(13,3),(14,3),(20,3),(24,3),(25,3),(26,3),(48,3), (54,3),\\ &  (63,3), (64,3),(98,3),(350,3),(24,4),(25,4),(32,4),(33,4),(48,4),(49,4),(63,4), (24,5), \\ & (32,5),  (48,5),(29,7),(30,7)\}.
\end{align*}
\end{lem}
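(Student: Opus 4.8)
The plan is to argue by contradiction: assume $P({}_k(n)) \leq \lambda k$ with $\lambda = 4.42$ (and, separately, $\lambda = 4.5$), and show this is impossible once $n > 4k$, $k \geq 2$ and $(n,k) \notin T$. Since ${}_k(n) = k!\binom{n}{k}$ and $P(k!) \leq k < \lambda k$, the hypothesis forces every prime factor of $\binom{n}{k}$ to be at most $\lambda k$. I would first record the two classical valuation inputs: the Kummer--Legendre formula gives $p^{v_p(\binom{n}{k})} \leq n$ for every prime $p$, and for a prime $p > k$ at most one of the $k$ consecutive integers $n, n-1, \dots, n-k+1$ is divisible by $p$.

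The combinatorial heart is to split each factor $n-i$ (for $0 \le i \le k-1$) into its $k$-smooth part $a_i$ and its rough part $m_i$, the largest divisor composed of primes $> k$. Because a prime exceeding $k$ divides at most one factor, the primes above $k$ appearing across the $m_i$ are distinct, so at most $\pi(\lambda k) - \pi(k)$ of the factors have $m_i > 1$ and the rest are $k$-smooth. Using $p^{v_p(\binom{n}{k})}\le n$ for the $\pi(k)$ primes $\le k$ I bound the smooth part of $\binom{n}{k}$ by $n^{\pi(k)}$, while distinctness of the large primes gives $\prod_i m_i = \prod_{k < p \le \lambda k} p^{v_p(\binom{n}{k})} \le n^{\pi(\lambda k) - \pi(k)}$; combined with $\binom{n}{k} \geq (n/k)^k$ this yields the fundamental inequality
$$
k \log\frac{n}{k} \;\le\; \pi(\lambda k)\,\log n .
$$
To push the admissible constant up to $4.42$ rather than an absolute $O(1)$ value, the crude use of $\pi(\lambda k)$ must be refined by partitioning the primes into the bands $(jk,(j+1)k]$ and counting, band by band, how many factors each band can account for (a prime near $4k$ can be the largest prime factor of far fewer of the $k$ factors than a prime near $k$), together with Erd\H{o}s-type bounds on the number and size of the $k$-smooth factors obtained via their squarefree kernels.

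With the fundamental inequality and its sharpened form in hand, the next step is to insert explicit estimates for $\pi(x)$ of Rosser--Schoenfeld/Dusart type to convert it into an upper bound $n \le c(\lambda)k$ together with an absolute bound $k \le k_0$. This reduces the problem to a finite range of pairs $(n,k)$, which I would dispose of by direct computation in SAGE, thereby both verifying the inequality for the remaining pairs and isolating the genuine exceptions listed in $T$ and the residual values $k \in \{82,83\}$ for which only the weaker constant $4.42$ survives. I expect the main obstacle to be precisely the extraction of the sharp constant: the asymptotic argument above comfortably gives $P({}_k(n)) \gg k$, but forcing the explicit value $4.42$ (respectively $4.5$) requires the delicate band-by-band prime count wedded to the best available explicit bounds on $\pi$, and the boundary region with $n$ only slightly larger than $4k$, together with the full enumeration of $T$, is where almost all of the technical and computational effort will lie.
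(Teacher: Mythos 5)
The paper does not prove this lemma at all: it is quoted verbatim from Nair and Shorey \cite{NaSh} (Theorem 1 there), so there is no internal proof to compare against. Judged on its own terms, your outline correctly identifies the Sylvester--Erd\H{o}s framework that underlies that line of work (smooth/rough factorisation of the terms $n-i$, the bound $p^{v_p(\binom{n}{k})}\le n$, the fact that a prime $>k$ divides at most one of $k$ consecutive integers, and a band-by-band refinement with explicit estimates for $\pi$), and for large $k$ this machinery does reduce the claim to a finite, computable range.

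The genuine gap is in your reduction to ``a finite range of pairs $(n,k)$'' via an inequality of the shape $n\le c(\lambda)k$. That reduction fails exactly where the exceptional set $T$ lives, namely at small $k$. For $k=2$ and $\lambda=4.42$ the counting step allows up to $\pi(8.84)-\pi(2)=3\ge k$ terms with a rough part, so the fundamental inequality and its refinements impose no constraint on $n$ whatsoever; the same degeneracy occurs for $k=3,4,5,7$. Consequently no bound $n\le c(\lambda)k$ is available there, and the complete enumeration of pairs such as $(2400,2)$, $(4374,2)$, $(350,3)$ --- i.e.\ the determination of all $n$ with $P(n(n-1))$ or $P({}_3(n))$ below a fixed bound --- requires the St\"ormer--Lehmer method of solving the associated Pell equations with smooth parameters (Lehmer \cite{Leh}, Luca--Najman \cite{LN}), not a direct search over an interval produced by prime-counting estimates. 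This is not a technical refinement of your argument but a separate input of a different nature; without it you can prove $P({}_k(n))>4.42k$ for all $k\ge k_0$ but cannot certify that $T$ is the complete list of exceptions for small $k$, which is the part of the lemma the present paper actually uses.
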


\begin{lem}\label{lem_new_4}
We have 
$$
\{ x : P({}_9(x)) < 100 \} = \{ 292 \}.
$$
This is due to Luca and Najman \cite[Corollary 6]{LN}. Thus 
$$
\{ x : P({}_k(x)) < 100 \} = \emptyset \quad \text{for} \quad k> 9.
$$
\end{lem}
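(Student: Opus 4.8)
The plan is to treat the two assertions of the lemma separately. The first equality, $\{x : P({}_9(x)) < 100\} = \{292\}$, is exactly the statement of \cite[Corollary 6]{LN}, so I would simply quote it; no independent argument is needed here, since the substance of this fact is the (computationally heavy) determination by Luca and Najman of all products of nine consecutive integers whose largest prime factor is below $100$. The only work left is to deduce the second assertion, namely that $\{x : P({}_k(x)) < 100\} = \emptyset$ for every $k > 9$, from this first equality.

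For that deduction I would argue by contradiction using overlapping blocks. Fix an integer $k \geq 10$ and suppose there is an $x$ with $P({}_k(x)) < 100$. Since ${}_k(x) = x(x-1)\cdots(x-k+1)$ contains at least the ten consecutive factors $x, x-1, \dots, x-9$, both of the products
$$
{}_9(x) = x(x-1)\cdots(x-8) \qquad \text{and} \qquad {}_9(x-1) = (x-1)(x-2)\cdots(x-9)
$$
are sub-products of ${}_k(x)$; hence each divides ${}_k(x)$, and therefore every prime dividing ${}_9(x)$ or ${}_9(x-1)$ divides ${}_k(x)$. This gives $P({}_9(x)) \leq P({}_k(x)) < 100$ and $P({}_9(x-1)) \leq P({}_k(x)) < 100$. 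Applying the first equality to each of these forces $x = 292$ and simultaneously $x-1 = 292$, which is impossible. Hence no such $x$ exists, and the set is empty for all $k > 9$.

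I do not expect any genuine obstacle in this second step: it is a one-line divisibility observation combined with the fact that $k \geq 10$ already provides two distinct windows of nine consecutive integers. The only point requiring a word of care is that the two windows $\{x,\dots,x-8\}$ and $\{x-1,\dots,x-9\}$ be genuinely distinct integer blocks lying inside the range of ${}_k(x)$, which holds precisely because $k \geq 10$; all the real difficulty is absorbed into the cited result of Luca and Najman.
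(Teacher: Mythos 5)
Your proposal is correct and matches the paper, which simply cites Luca--Najman for the first equality and asserts the second with ``Thus''; your overlapping-windows argument (that for $k\geq 10$ both ${}_9(x)$ and ${}_9(x-1)$ divide ${}_k(x)$, forcing $x=292$ and $x-1=292$ simultaneously) is a correct and clean way to make that implicit deduction explicit. No issues.
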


\begin{defi}[Newton Polygon]\label{Def_newton}
For a prime $q$ and non zero rational numbers  $\frac{a}{b}$ with $(a,b ) = 1$ and $ab \neq 0$, the $q$-adic valuation is given by
$$
 \nu_q \Big(\frac{a}{b} \Big) = j_1 -j_2 \quad \text{where} \quad
q^{j_1} ||a,~~~~ q^{j_2} || b.
$$
We define $\nu_q(0) = + \infty$. Let $h(x) = \sum\limits_{i=0}^{n}a_i x^i \in \mbb Z[x] $ with $a_0a_n \neq 0$. Let 
$$
T = \{ (0, \nu_q(a_n)), (1, \nu_q(a_{n-1})), \dots, (n, \nu_q(a_0))   \}
$$
be a set of points in the extended plane $\mbb R \cup \{\infty\}$. We consider the lower edges along the convex hull of these points. The left most edge has initial point $(0, \nu_q(a_n))$ and the right most edge has end point $(n, \nu_q(a_0))$.  The end points of all edges belong to $T$. The slopes of the edges are in the increasing order when we calculate from the left to the right. The polygon path formed by joining these edges is called the Newton polygon for $h(x)$ with respect to $q$. A lattice point of the edge is an integer point on the edge other than the end points of the edge.
\end{defi}

In 1995, Filaseta \cite[Lemma 2]{Fila} gave the following lemma.

\begin{lem}[Filaseta]\label{lem_3}
Let $k$ and $l$ be integers with $k > l \geq 0$. Suppose $h(x) = \sum\limits_{j = 0}^{n}b_jx^j \in \mbb Z[x]$ and $p$ prime such that $p \nmid b_n$ and $p$ divides $b_j$ for $j$ in $\{ 0,1, \dots , n-l-1 \}$ and the right most edge of the Newton polygon for $h(x)$ with respect to $p$ has the slope less than $1/k$. Then for any integers $a_0 , a_1, \dots , a_n$ with $|a_0| = |a_n| =1$, the polynomial $f(x) = \sum\limits_{j = 0}^{n} a_j b_jx^j \in \mbb Z[x]$ can not have a factor with degree in the interval $[l+1, k]$.
\end{lem}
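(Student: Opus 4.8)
The plan is to argue through the Newton polygon of $f$ itself, transferring the hypotheses on $h$ across the multiplication by the $a_j$. Write $f(x)=\sum_{j=0}^n c_j x^j$ with $c_j=a_jb_j$. Since $|a_0|=|a_n|=1$ we have $\nu_p(c_n)=\nu_p(b_n)=0$ and $\nu_p(c_0)=\nu_p(b_0)$, while for every $0\le j\le n-l-1$ the hypothesis $p\mid b_j$ gives $\nu_p(c_j)=\nu_p(a_j)+\nu_p(b_j)\ge 1$. Thus $f$ inherits the divisibility pattern of $h$: its leading coefficient is a $p$-adic unit and its bottom $n-l$ coefficients $c_0,\dots,c_{n-l-1}$ are all divisible by $p$.

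First I would control the slopes of the Newton polygon of $f$. Every plotted height $\nu_p(c_j)$ is non-negative and the leftmost vertex is $(0,\nu_p(c_n))=(0,0)$, so every edge has non-negative slope. For the upper bound, the slope of the right most edge equals $\max_{1\le j\le n}\frac{\nu_p(c_0)-\nu_p(c_j)}{j}$; since $\nu_p(c_0)=\nu_p(b_0)$ and $\nu_p(c_j)\ge\nu_p(b_j)$, each quotient is at most the corresponding quotient for $h$, so the right most slope of $f$ is at most that of $h$, hence $<1/k$. As slopes increase from left to right, every edge of $f$ lies in $[0,1/k)$. Recalling the reversed convention of Definition \ref{Def_newton}, under which an edge of slope $\mu$ corresponds to roots $\alpha\in\overline{\mbb{Q}_p}$ of $f$ with $\nu_p(\alpha)=\mu$, I conclude that every root of $f$ has $p$-adic valuation in $[0,1/k)$.

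Now suppose, for contradiction, that $f$ has a factor of degree $m$ with $l+1\le m\le k$; by Gauss's lemma write $f=u\,v$ with $u,v\in\mbb Z[x]$ and $\deg u=m$. Because $p\nmid c_n$ and $\mathrm{lc}(u)\,\mathrm{lc}(v)=c_n$, we get $\nu_p(\mathrm{lc}(u))=0$, so the sum of the valuations of the roots of $u$ equals $\nu_p(u(0))\in\mbb Z_{\ge 0}$. On the other hand these $m$ roots are among the roots of $f$, each of valuation in $[0,1/k)$, so their sum is strictly less than $m/k\le 1$. A non-negative integer smaller than $1$ is $0$, and since the summands are non-negative this forces every root of $u$ to have valuation exactly $0$.

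It remains to count the valuation-$0$ roots of $f$. Their number equals the horizontal length of the slope-$0$ edge, which starts at $(0,0)$ and runs only along heights equal to $0$. A plotted point has height $0$ precisely when the corresponding coefficient is a $p$-adic unit, and $p\mid c_j$ for all $j\le n-l-1$ allows this only at the $l$ positions $x=1,\dots,l$. Hence $f$ has at most $l$ roots of valuation $0$, so $m\le l$, contradicting $m\ge l+1$. The step deserving the most care, and the main obstacle, is the slope transfer in the second paragraph: one must verify that replacing the $b_j$ by $c_j=a_jb_j$, which can only raise the interior heights while fixing the two endpoint heights, cannot increase the slope of the right most edge — exactly what the displayed $\max$-formula makes transparent.
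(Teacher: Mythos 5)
Your proof is correct. Note first that the paper itself gives no proof of this statement: it is quoted verbatim from Filaseta (Lemma 2 of \cite{Fila}), so the comparison has to be with Filaseta's argument rather than with anything in this text. Your route and his share the same essential first step, the slope-transfer observation that passing from $b_j$ to $c_j=a_jb_j$ fixes the two endpoint heights $\nu_p(c_n)=0$ and $\nu_p(c_0)=\nu_p(b_0)$ while only raising the interior ones, so every edge of the Newton polygon of $f$ still has slope in $[0,1/k)$. They diverge afterwards. Filaseta stays combinatorial: he invokes Dumas's theorem (\lemref{lem_dumus} here) to write the polygon of $f$ as a rearrangement of translates of the edges of the polygons of the two integer factors $u$ and $v$; since the edges of the polygon of $u$ have integer endpoints and total horizontal length $\deg u=m\le k$, any edge of positive slope less than $1/k$ would need horizontal run greater than $k\ge m$, so all of them are horizontal, and they must then fit inside the slope-$0$ portion of the polygon of $f$, which has length at most $l$ because $p\mid c_j$ for $j\le n-l-1$ --- contradicting $m\ge l+1$. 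You instead import the $p$-adic root-valuation interpretation of the polygon (a standard fact, but one the paper never states), deduce that every root of $f$ has valuation in $[0,1/k)$, and replace the lattice-point count by the integrality argument $0\le\sum_i\nu_p(\alpha_i)=\nu_p(u(0))<m/k\le 1$; your final count of valuation-$0$ roots recovers the same bound $\le l$ from the horizontal edge. The two proofs are equivalent in content, and yours is arguably cleaner, but it relies on $\overline{\mbb Q}_p$ machinery where Filaseta's needs only \lemref{lem_dumus}. One small point you use silently: $b_0\ne 0$ (so that $u(0)\ne 0$ and $\nu_p(u(0))$ is a finite non-negative integer); this is implicit in the hypothesis that the right most edge of the polygon of $h$ has finite slope, but it is worth saying.
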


In 1906, G. Dumus \cite[Lemma 2.1]{Dum} gave the following result.
\begin{lem}\label{lem_dumus}
Let $f(x)$ and $h(x)$ be in $ \mbb Z[z]$ with $f(0)h(0) \neq 0$,  and let $p$ be a prime. Let $r$
be a non-negative integer such that $p^r$ divides the leading coefficient of $f(x)h(x)$ but $p^{r+1}$ does not. Then the edges of Newton polygon for $f(x)h(x)$ with respect to $p$ can be formed by constructing a polygonal path beginning at $(0,r)$ and using translates of the edges in the Newton polygon for $f(x)$ and $h(x)$ with respect to the prime $p$. Necessarily, the translated edges are translated in such a way as to form a polygonal path with the slope of the edges increasing.
\end{lem}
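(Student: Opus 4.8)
The plan is to prove the equivalent, coordinate-free statement that the Newton polygon is \emph{additive} under multiplication: writing $NP(\cdot)$ for the Newton polygon of a polynomial, I claim $NP(fh)=NP(f)+NP(h)$, the Minkowski sum of the two lower convex hulls. Concatenating translates of the edges of $NP(f)$ and $NP(h)$ in order of increasing slope is exactly how one forms the lower boundary of this Minkowski sum, so the geometric description in the statement is just an unwinding of ``$NP(fh)=NP(f)+NP(h)$''. The left endpoint is automatically correct: the leading coefficient of $fh$ is the product of the leading coefficients of $f$ and $h$, so its $q$-valuation is the sum of theirs, which equals $r$ by hypothesis; thus the point $(0,r)$ of $NP(fh)$ is the sum of the left endpoints of the two summands. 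The horizontal convention of Definition~\ref{Def_newton} (leading coefficient at abscissa $0$) differs from the natural one, in which abscissa $i$ carries $\nu_q$ of the coefficient of $x^i$, only by an affine reflection of the horizontal axis, which is compatible with products and preserves the increasing-slope reading; so I may argue in whichever convention is convenient. Below let $f(x)=\sum_i b_i x^i$, $h(x)=\sum_j c_j x^j$, write $d_m=\sum_{i+j=m} b_i c_j$ for the coefficients of $fh$, let $N_f,N_h$ be the convex piecewise-linear functions whose graphs are $NP(f),NP(h)$ (set to $+\infty$ outside the degree ranges), and let $M=N_f\oplus N_h$ be their infimal convolution $M(m)=\min_{i+j=m}\big(N_f(i)+N_h(j)\big)$, whose graph is precisely the Minkowski-sum polygon.

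The argument has two halves. First, the soft direction shows $NP(fh)$ lies weakly above $M$. Since $\nu_q$ is a valuation, $\nu_q(b_i c_j)=\nu_q(b_i)+\nu_q(c_j)$, and the ultrametric inequality gives $\nu_q(d_m)\ge \min_{i+j=m}\big(\nu_q(b_i)+\nu_q(c_j)\big)$. As every data point lies on or above its own hull, $\nu_q(b_i)\ge N_f(i)$ and $\nu_q(c_j)\ge N_h(j)$, whence $\nu_q(d_m)\ge \min_{i+j=m}\big(N_f(i)+N_h(j)\big)=M(m)$. Thus each point $(m,\nu_q(d_m))$ defining $NP(fh)$ sits on or above $M$, and therefore so does the whole lower hull $NP(fh)$.

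The crux is the reverse direction: equality at the vertices of $M$, which amounts to showing that \emph{no cancellation} occurs there. Fix a vertex $V=(m_V,M(m_V))$ of $M$ and choose a slope $\lambda$ in the open normal cone of $V$, i.e.\ strictly between the slopes of the two edges of $M$ meeting at $V$. Minimizing the linear functional $(x,y)\mapsto y-\lambda x$ over a Minkowski sum equals the sum of the minima over the summands, and since this minimum is attained on $M$ only at the single point $V$, the minimizing faces of $NP(f)$ and $NP(h)$ must each be a single vertex, say $A=(i_A,N_f(i_A))$ and $B=(j_B,N_h(j_B))$ with $A+B=V$ and $i_A+j_B=m_V$. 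Because $A$ is the unique minimizer on $NP(f)$, it is the unique data point minimizing $i\mapsto \nu_q(b_i)-\lambda i$ (every other data point lies strictly above the supporting line of slope $\lambda$), and similarly for $B$. Hence among all pairs with $i+j=m_V$ the quantity $\nu_q(b_i)+\nu_q(c_j)=\big(\nu_q(b_i)-\lambda i\big)+\big(\nu_q(c_j)-\lambda j\big)+\lambda m_V$ is uniquely minimized at $(i_A,j_B)$. Therefore in $d_{m_V}=\sum_{i+j=m_V} b_i c_j$ the term $b_{i_A}c_{j_B}$ has strictly smaller valuation than every other term, so $\nu_q(d_{m_V})=\nu_q(b_{i_A})+\nu_q(c_{j_B})=M(m_V)$; that is, the data point at abscissa $m_V$ lies exactly on $V$. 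This uniqueness-of-the-dominant-monomial step is the heart of the matter and the only place where more than convexity is used, so I expect it to be the main obstacle.

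Finally I pinch the two halves together. Every vertex $V$ of $M$ is realized by the data point $(m_V,\nu_q(d_{m_V}))=V$; since all data points lie on or above $M$ and $V$ is a vertex of $M$, this point is downward-extreme and hence a vertex of $NP(fh)$, so $NP(fh)$ contains all vertices of $M$. On each edge $[V,V']$ of $M$, the path $NP(fh)$ runs from $V$ to $V'$ lying on or above the segment $VV'$; being convex and agreeing with the line through $V,V'$ at both endpoints, it cannot bulge above the chord and so coincides with the segment. Therefore $NP(fh)=M=NP(f)+NP(h)$, which is the assertion; the lower bound and this concluding convexity pinch are routine, the no-cancellation claim being the essential point.
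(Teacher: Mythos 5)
The paper does not prove this lemma at all: it is quoted verbatim as a classical result of Dumas (1906), with only a citation, so there is no in-paper argument to compare against. Your proof is a correct, self-contained derivation via the standard modern route: you recast the statement as additivity of Newton polygons under multiplication, $NP(fh)=NP(f)+NP(h)$ (Minkowski sum, equivalently infimal convolution of the lower hulls), prove the easy inequality from the ultrametric bound $\nu_q(d_m)\ge\min_{i+j=m}(\nu_q(b_i)+\nu_q(c_j))$, and obtain the reverse inequality at each vertex $V$ of the sum by picking a slope $\lambda$ in the open normal cone of $V$ and showing the corresponding dominant monomial $b_{i_A}c_{j_B}$ is unique, so no cancellation can raise the valuation there; the convexity pinch then forces equality along each edge. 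The key steps all check out: the argmin of a linear functional over a Minkowski sum is the sum of the argmins, so uniqueness of the minimizer $V$ forces both supporting faces to be single vertices, and the strict domination of the single term gives exact equality of valuations. Your handling of the paper's reversed horizontal convention (leading coefficient at abscissa $0$) by an affine reflection is also fine, since such a reflection preserves lower hulls, the increasing-slope reading, and Minkowski sums up to translation. In short: nothing to reconcile with the paper, and your argument is the standard and correct proof of Dumas's theorem.
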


For \lemref{lem_prime},  see \cite{HeiKem}. 

\begin{lem}\label{lem_prime}
\begin{itemize}
\item[(a)] Let $\theta \in \{ \frac{1}{39},  \frac{1}{1000}\}$ and $m_0(\theta)= 800$ and $48683$ according as $\theta  = \frac{1}{39}$ and $\frac{1}{1000}$, respectively. Then there exists a prime $p$ satisfying
$$
m < p < (1+ \theta) m \quad \text{for} \quad m \geq m_0(\theta).
$$
%There exists a prime $p$ satisfying  $m < p < \frac{21}{20} m$ for $m \geq 213,$
\item[(b)]   $\pi(1.064286 m) - \pi (m) >0 $ for $m \geq 140$.\label{lem_pnt}
%\item[(3)] $m < p < \frac{63}{50} m$ for $m \geq 20$.
\end{itemize}
\end{lem}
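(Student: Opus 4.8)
Both parts assert that the open interval $(m,(1+\theta)m)$ contains a prime for every $m\geq m_0(\theta)$ (with $1+\theta=1.064286$ and $m_0=140$ in part (b)), which is precisely the statement $\pi((1+\theta)m)-\pi(m)>0$. The plan is to split the range of $m$ into an unbounded tail, handled by explicit estimates for the prime-counting function, and a bounded initial block, handled by a finite computation.

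For the tail I would use explicit Dusart-type bounds of the shape
$$
\frac{x}{\log x}\Big(1+\frac{1}{\log x}\Big) < \pi(x) < \frac{x}{\log x}\Big(1+\frac{1.2762}{\log x}\Big),
$$
valid beyond explicit points (the upper bound being the one already invoked in the proof of \lemref{lem_k_eq_1}). Inserting the lower bound at $x=(1+\theta)m$ and the upper bound at $x=m$ gives
$$
\pi((1+\theta)m)-\pi(m) > \frac{(1+\theta)m}{\log((1+\theta)m)}\Big(1+\frac{1}{\log((1+\theta)m)}\Big) - \frac{m}{\log m}\Big(1+\frac{1.2762}{\log m}\Big).
$$
Writing $L=\log m$, the leading part of the right-hand side is $\theta m(L-1)/\big(L(L+\log(1+\theta))\big)\sim\theta m/L$, so positivity holds once $m$ is large; the task is to pin down an explicit threshold $M(\theta)$ beyond which the resulting one-variable inequality in $L$ is verified, by checking its sign at $L=\log M(\theta)$ together with the monotonicity of the surplus $\theta m/L$ against the error $O(m/L^{2})$.

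For the bounded block $m_0(\theta)\le m<M(\theta)$ I would avoid testing each $m$ separately and instead use a covering argument. Listing consecutive primes $p_1\le p_2<\cdots<p_N$ with $p_1$ the least prime $\geq m_0(\theta)$ and $p_N\geq M(\theta)$, and verifying the single chain $p_{i+1}<(1+\theta)p_i$ for all $i$, one gets: for any $m$ with $p_i\le m<p_{i+1}$ we have $m<p_{i+1}<(1+\theta)p_i\le(1+\theta)m$, so $p_{i+1}\in(m,(1+\theta)m)$. Thus the entire block is settled by checking finitely many consecutive prime ratios in SAGE, and the listed values $m_0(1/39)=800$, $m_0(1/1000)=48683$, and $m_0=140$ in part (b) are exactly the starting points at which this chain of ratio inequalities becomes valid.

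The main obstacle is the narrow interval $\theta=1/1000$. With the basic bounds the lower and upper estimates already disagree in their $1/\log x$ coefficients ($1$ versus $1.2762$), so the surplus $\theta m/L$ cannot overcome the error $\approx 0.2762\,m/L^{2}$ until $L>0.2762/\theta$, forcing an astronomically large $M(\theta)$. To keep $M(1/1000)$ within computational reach one must instead use the refined Dusart bounds, whose $1/\log x$ coefficients coincide; the comparison is then decided only at the $1/\log^{2}x$ level, lowering the threshold to order $10^{7}$--$10^{8}$. The covering computation below $M(1/1000)$ consequently ranges over tens of millions of primes and is the heavy part of the argument, whereas the two wide-interval cases ($\theta=1/39$ and the case in part (b)) are routine because there $\theta m/L$ dominates the error already for moderate $m$.
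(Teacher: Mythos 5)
The paper offers no proof of this lemma: it is simply quoted from Harborth and Kemnitz \cite{HeiKem}, so there is no internal argument to compare against. Your two-stage plan --- explicit prime-counting estimates for an unbounded tail, plus a finite verification of consecutive-prime ratios on the initial block --- is the standard way such statements are established and is in substance what the cited source does; the difference is that \cite{HeiKem} handles the tail with a single off-the-shelf localized result of Rosser--Schoenfeld type (a prime in $(x,(1+\tfrac{1}{16597})x)$ for $x\ge 2010760$), which covers $\theta=\tfrac1{39}$, $\theta=\tfrac1{1000}$ and part (b) simultaneously and keeps the finite computation at about $2\times 10^6$. That is also the fix for the weakest point of your proposal: for $\theta=\tfrac1{1000}$, differencing a global upper bound for $\pi(m)$ against a global lower bound for $\pi((1+\theta)m)$ is intrinsically lossy, and the matching-coefficient Dusart bounds you invoke have validity thresholds (the upper one only from roughly $3\times 10^9$) that push the finite range well past your estimate of $10^7$--$10^8$; quoting a single ``prime in $(x,x(1+\tfrac{1}{25\log^2x})]$ for $x\ge 396738$'' type theorem is both cleaner and cheaper. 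Two small boundary slips in the covering argument are worth repairing: taking $p_1$ to be the least prime $\ge m_0(\theta)$ leaves the integers $m\in[m_0(\theta),p_1)$ uncovered, so the chain must start at the largest prime $\le m_0(\theta)$; and the stated constants $m_0(\theta)$ are \emph{not} the points from which the ratio inequalities $p_{i+1}<(1+\theta)p_i$ all hold --- the gap straddling $m_0(\theta)$ may still violate that inequality. The correct threshold attached to the last offending gap $(p,q)$ with $q\ge(1+\theta)p$ is $m_0=\lfloor q/(1+\theta)\rfloor+1$ (for part (b), $1.064286\approx\tfrac{149}{140}$ and the gap $139\to149$ gives exactly $m_0=140$). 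None of this affects the soundness of the method, only the bookkeeping.
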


\begin{lem}\label{lem_ShSi}
For $\theta > 0$, assume that there exists a prime in $(m, (1+ \theta) m)$ whenever  $m \geq m_0= m_0(\theta)$ where $m_0$ is depending only on $\theta$ and $m_0(\theta)$ is given by \lemref{lem_prime}(a) when $\theta \in  \{ \frac{1}{39}, \frac{1}{1000}\}$. Assume that $G(x)$ has a factor of degree $k \geq 2$. Let $2k \leq n \leq 4k$ and $0 < \phi < \frac{1}{9}$. Then
$$
s > 2(1-2\phi) k \quad \text{for} \quad n \geq \frac{m_0 (\frac{\phi}{1-\phi})}{1-\phi} . 
$$
\end{lem}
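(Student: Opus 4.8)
The plan is to argue by contradiction. Suppose $s \le 2(1-2\phi)k$; I will manufacture a prime $p$ that both must divide $\pi_0$ (via \lemref{lem_1}) and cannot divide $\pi_0$. First I would set $\theta = \frac{\phi}{1-\phi}$, so that $1+\theta = \frac{1}{1-\phi}$, and take $m = (1-\phi)n$. The hypothesis $n \ge \frac{m_0(\phi/(1-\phi))}{1-\phi}$ is exactly the statement $m \ge m_0(\theta)$, so the assumed prime-gap property produces a prime $p$ with $(1-\phi)n < p < (1+\theta)m = n$.

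Next I would locate $p$ among the $k$ consecutive integers $n-k+1, \dots, n$. Since $\phi < \tfrac19$ and $n \le 4k$, we get $\phi n < \tfrac49 k < k$, whence $p > (1-\phi)n = n-\phi n > n-k$; combined with $p < n$ this shows $p = n-i$ for some integer $i$ with $1 \le i \le k-1$, and also $i = n-p < \phi n$. Thus $p$ divides ${}_k(n) = n(n-1)\cdots(n-k+1)$, and $p > (1-\phi)n \ge 2(1-\phi)k > k$.

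Now I would apply \lemref{lem_1} with $a=0$: as $p>k$ divides ${}_k(n)$, it divides $\pi_0\pi_n$, and $|\pi_n|=1$ forces $p \mid \pi_0$. Since $|b_0|=1$ we have $v_p(\pi_0)=v_p(a_0)$ with $a_0=\binom{n+s}{n}=\frac{(n+1)\cdots(n+s)}{s!}$, and the contradiction is to show $v_p(a_0)=0$. From $n \ge 2k$ the assumption $s \le 2(1-2\phi)k$ gives $s \le (1-2\phi)n$, so $p > (1-\phi)n > s$ and hence $v_p(s!)=0$. For the numerator, the nearest multiple of $p$ at or below $n$ is $p=n-i$ itself, and $2i < 2\phi n \le n-s$ (the last inequality being $s \le (1-2\phi)n$) yields $2p = 2(n-i) > n+s$; therefore no multiple of $p$ lies in $[n+1,n+s]$ and $v_p((n+1)\cdots(n+s))=0$. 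Thus $v_p(a_0)=0$, contradicting $p \mid \pi_0$, and so $s > 2(1-2\phi)k$.

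The individual estimates are routine once the parameters are fixed, so the one delicate point — the part I would be most careful about — is the joint calibration of $m$ and $\theta$: the interval $((1-\phi)n,n)$ must be narrow enough to sit inside the block $[n-k+1,n]$, which needs $\phi n < k$ and is exactly why the range $n \le 4k$ and the bound $\phi < \tfrac19$ appear, while its members must be large enough that the next multiple $2p$ overshoots $n+s$, which needs $s \le (1-2\phi)n$ and is where $n \ge 2k$ is used. Matching $m=(1-\phi)n$ against the threshold $m_0(\theta)$ then produces precisely the stated lower bound $n \ge m_0(\phi/(1-\phi))/(1-\phi)$.
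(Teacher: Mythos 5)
Your proposal is correct and follows essentially the same route as the paper: the same prime $p$ in $((1-\phi)n,n)\subseteq(n-k+1,n)$ obtained from the Bertrand-type hypothesis with $m=(1-\phi)n$, $\theta=\phi/(1-\phi)$, and the same appeal to Lemma~\ref{lem_1} forcing $p\mid a_0$. The only cosmetic difference is that the paper derives $p\le i+j\le \phi n+s$ directly from $p\mid(n-i)$ and $p\mid(n+j)$, whereas you phrase the same divisibility obstruction as $2p>n+s$ inside a proof by contradiction.
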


\begin{proof}
Let $0 < \phi < \frac{1}{9}$. We consider the interval $(n-\phi n, n)$. Put
$$
n- \phi n = (1- \phi)n = m.
$$
Then $$ (n-\phi n, n) = \Big( m, \frac{m}{1- \phi}  \Big) = (m, (1+\theta)m)$$
where $\theta = \frac{\phi}{1-\phi}  $.
Therefore we see from \lemref{lem_prime} (a) that there exists $m_0(\theta)$ such that for $m \geq m_0(\theta)$, we have

$$
p_0 \in (m, (1+\theta)m) = (n-\phi n, n)
$$
where $p_0$ is a prime. 

We may assume
$$
n \geq \frac{m_0 ( \frac{\phi}{1-\phi} )}{1-\phi} .
$$
%Let $n \geq \frac{m \Big(\frac{\phi}{1-\phi} \Big)}{1-\phi} $.
Then $m \geq m_0(\theta)$ since $n = \frac{m}{1-\phi}$ and $\theta = \frac{\phi}{1-\phi}$.
Now $\phi n < \frac{4k}{9}< k-1$ by $n \leq 4k$ and $k \geq 2$. Therefore we observe that  $(n-\phi n, n)  \subseteq (n-k+1, n)$. By \lemref{lem_1} there exists $0 \leq i \leq \phi n$ such that $p_0 | (n-i)$ and $p_0 | \frac{{}_s(n+s)}{s!}$. Further $p_0 > (1 - \phi) n$ and we can assume that $(1 - \phi) n > s$ otherwise
$$
2(1-2\phi) k  < (1-\phi)n \leq s
$$
 since $n \geq 2k$ and the assertion follows. Therefore there exists $j$ with $1 \leq j \leq s$ such that $p_0 | (n+j)$. Thus
$$
(1 - \phi) n < p_0 \leq i+j \leq \phi n +s
$$
which implies that $s > (1-2\phi) n \geq 2(1-2\phi) k $.
\end{proof}

\begin{lem}\label{lem_s>p^r}
Assume that $G(x)$ has a factor of degree $k= 2$. Let $\delta \in \{  0,1 \}$ and $r> 0$ be an integer. Let $p$ be a prime such that $p > k$ and $p^r | (n-\delta) $. Let $(s+1, p) = 1$ if $\delta = 1$. Then
$$
s \geq p^r - \delta.
$$ 
\end{lem}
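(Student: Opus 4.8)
The plan is to derive the bound from a single $p$-adic valuation computation on the lowest coefficient $a_0 = \binom{n+s}{n}$ of $G$, after first extracting a divisibility statement from \lemref{lem_1}. First I would apply \lemref{lem_1} to $G(x)=\sum_{j=0}^{n}\pi_j\frac{x^j}{j!}$ with $a=0$ and $k=2$. Since $p^r\mid(n-\delta)$ forces $p\mid n$ when $\delta=0$ and $p\mid(n-1)$ when $\delta=1$, in either case $p$ divides ${}_2(n)=n(n-1)$, and $p>k=2$ holds by hypothesis. Hence \lemref{lem_1} gives $p\mid\pi_0\pi_n$. Because $|\pi_n|=1$ we have $p\nmid\pi_n$, so $p\mid\pi_0$, and since $|\pi_0|=|a_0|=\binom{n+s}{n}$ this yields $v_p\!\left(\binom{n+s}{n}\right)\geq 1$.

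Next, arguing by contradiction, I would assume $s<p^r-\delta$ and show that in fact $v_p\!\left(\binom{n+s}{n}\right)=0$, which is incompatible with the first step. Writing $\binom{n+s}{n}=\frac{\prod_{j=1}^{s}(n+j)}{s!}$, the key observation is that $n\equiv\delta\pmod{p^r}$, so $n+j=(n-\delta)+(j+\delta)$ with $j+\delta\equiv n+j\pmod{p^r}$. The assumption $s<p^r-\delta$ guarantees $j+\delta<p^r$ for all $1\leq j\leq s$, hence $v_p(j+\delta)\leq r-1<r\leq v_p(n-\delta)$, and therefore $v_p(n+j)=\min\big(v_p(n-\delta),v_p(j+\delta)\big)=v_p(j+\delta)$. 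Summing over $j$ gives $v_p\!\left(\prod_{j=1}^{s}(n+j)\right)=\sum_{i=1+\delta}^{s+\delta}v_p(i)$, which equals $v_p(s!)$ when $\delta=0$, and equals $v_p((s+1)!)=v_p(s!)+v_p(s+1)$ when $\delta=1$. In the latter case the hypothesis $(s+1,p)=1$ makes $v_p(s+1)=0$, so in both cases $v_p\!\left(\prod_{j=1}^{s}(n+j)\right)=v_p(s!)$, whence $v_p\!\left(\binom{n+s}{n}\right)=0$. This contradicts the divisibility established above, so $s\geq p^r-\delta$.

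The argument is short, and the delicate points are purely bookkeeping: one must ensure that the block $\{n+1,\dots,n+s\}$ reduces modulo $p^r$ to the consecutive residues $\{1+\delta,\dots,s+\delta\}$ lying strictly below $p^r$, which is exactly what $s<p^r-\delta$ supplies and where the cases $\delta=0$ and $\delta=1$ diverge. The role of the coprimality condition $(s+1,p)=1$ is precisely to remove the single boundary term $v_p(s+1)$ that would otherwise survive and spoil the cancellation against $s!$ in the $\delta=1$ case. I expect this boundary/coprimality interplay to be the main thing to get right; the valuation identity $v_p(n+j)=v_p(j+\delta)$ itself is routine once the congruence $n\equiv\delta\pmod{p^r}$ and the size bound $j+\delta<p^r$ are in place.
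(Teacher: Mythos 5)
Your proposal is correct and follows essentially the same route as the paper: invoke \lemref{lem_1} with $p\mid n(n-1)$ and $|\pi_n|=1$ to force $p\mid\pi_0$, then assume $s<p^r-\delta$ and show $\nu_p\bigl(\frac{(n+1)\cdots(n+s)}{s!}\bigr)=0$ by matching each $n+j$ with $j+\delta$ modulo $p^r$, using the coprimality of $s+1$ and $p$ to kill the boundary term when $\delta=1$. The only cosmetic difference is that the paper expresses the valuation via the Legendre-type sums $\bigl[\frac{s}{p}\bigr]+\dots+\bigl[\frac{s}{p^{r-1}}\bigr]$ while you compute $\nu_p(n+j)=\nu_p(j+\delta)$ termwise; these are the same computation.
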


\begin{proof}
Let $s < p^r -\delta$. Let $\delta = 0$. Then $s < p^r$. We consider
\begin{align*}
\nu_p\big( (n+1)(n+2) \dots (n+s)\big) & = \Big[{\frac{s}{p}} \Big]+ \dots + \Big[{\frac{s}{p^{r-1}}}\Big] \\ & = \nu_p(s!).
\end{align*}
Let $\delta = 1$. Then $s < p^r-1$. We consider
\begin{align*}
\nu_p\big( (n+1)(n+2) \dots (n+s)\big) & = \nu_p\big( (n-1+2)(n-1+3) \dots (n-1+s+1)\big)\\ & 
= \nu_p\big((n-1+1) (n-1+2)(n-1+3) \dots (n-1+s+1)\big) \\ & 
= \Big[{\frac{s+1}{p}}\Big] + \dots + \Big[{\frac{s+1}{p^{r-1}}}\Big] \\ & = \nu_p((s+1)!).
\end{align*}

When $\delta =0$, we have
\begin{align*}
\nu_p\Big(\frac{(n+1)(n+2) \dots (n+s)}{s!} \Big) & = \nu_p\big( (n+1)(n+2) \dots (n+s)\big) - \nu_p(s!) \\ & = 0.
\end{align*}

When $\delta =1$, we have
\begin{align*}
\nu_p\Big(\frac{(n+1)(n+2) \dots (n+s)}{s!} \Big) & = \nu_p((s+1)!) - \nu_p(s!) \\ & = \nu_p((s+1))  
\\ & = 0  \quad \text{since} \quad (s+1, p) =1.
\end{align*}
This gives a contradiction to \lemref{lem_1}.
\end{proof}

\begin{lem}\label{lem_dum_exp}
The polynomial $G(x) = G(x,n,s)$ has no factor of degree $k$ when $(n,k,s)$ belongs to the following set.
%\label{eq_dum_ex}
\begin{align*}
\Omega = \{& (4,2,2),(4,2,47),(6,2,4), (6,2,14), (6,2,79),(8,2,6),(8,2,13),(8,2,48),(8,2,62),\\ & (9,2,26), (12,2,43), (16,2,19),(16,2,24), (16,2,79),   (18,2,16),  (24,2,22),  (32,2,30),  \nonumber\\ &(48,2,46),   (54,2,52),
(64,2,62),  (72,2,70),  (6,3,47), (8,3,6),(8,3,13), (9,3,5), \nonumber\\ & (9,3,6),(9,3,13),  (16,3,12), (16,3,13),(16,3,62),  (18,3,33),
(36,3,15),   (9,4,5),  \nonumber\\ &  (9,4,6), (16,4,12),  (16,4,62), (28,4,24),(81,4,77),(10,5,5)  \nonumber\}.
\end{align*}
\end{lem}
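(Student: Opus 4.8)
The plan is to reduce the assertion to a finite, prime-by-prime verification attached to each triple, and to rule out a factor of degree $k$ by a Newton polygon argument based on \lemref{lem_dumus}. Fix $(n,k,s) \in \Omega$ and recall that $G_1(x) = n!\,G(x)$ is, up to the unit $b_n$, a monic integer polynomial whose coefficient of $x^j$ equals $b_j a_j \tfrac{n!}{j!}$. In particular its leading coefficient is the unit $b_n$ and its constant term is $b_0 \tfrac{(n+s)!}{s!}$, since $a_0 n! = \tfrac{(n+s)!}{s!}$ and $|b_0|=|b_n|=1$. Because $b_1,\dots,b_{n-1}$ range over all integers the interior coefficients are not fixed, yet the two end coefficients are, and every interior coefficient $b_j a_j \tfrac{n!}{j!}$ has $p$-adic valuation at least $v_j := \nu_p\!\big(a_j \tfrac{n!}{j!}\big)$ for each prime $p$. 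This asymmetry between the forced endpoints and the merely bounded-below interior is the feature I intend to exploit.

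For a given triple I would choose a prime $p$ and form the base points $P_i = (i,\ \nu_p(\text{coefficient of } x^{n-i} \text{ in } g_1))$ for $0 \le i \le n$, so that $P_0=(0,0)$ and $P_n=(n,\nu_p(\tfrac{(n+s)!}{s!}))$ are forced, while the points of the Newton polygon of $G_1$ with respect to $p$ lie on or above $P_1,\dots,P_{n-1}$. The decisive step is to select $p$ so that every interior point $P_i$ lies on or above the segment joining $P_0$ to $P_n$. When this holds, raising an interior coefficient (choosing $b_j$ with larger $p$-valuation) only pushes the corresponding point further above that segment, so for every admissible choice of the $b_j$ the Newton polygon of $G_1$ is exactly the single edge $P_0P_n$, of slope $\nu_p(\tfrac{(n+s)!}{s!})/n$. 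Writing this slope in lowest terms as $a/b$ gives $b = n/\gcd(n,\nu_p(\tfrac{(n+s)!}{s!}))$, and by \lemref{lem_dumus} any monic factor of $G_1$ contributes to this edge only in horizontal widths that are multiples of $b$. Hence, provided $b \nmid k$, no factor of degree $k$ can occur, which is the desired conclusion for that triple.

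Thus the proof reduces to verifying, for each triple in $\Omega$, the existence of a prime $p$ meeting two bookkeeping conditions: that every interior base point lies on or above the edge $P_0P_n$, and that $n/\gcd(n,\nu_p(\tfrac{(n+s)!}{s!})) \nmid k$. Both are finite checks on the explicit coefficients $\binom{n+s-j}{n-j}\tfrac{n!}{j!}$, to be carried out in SAGE; one verifies, for instance, that the entry $(4,2,2)$ is settled by $p=2$, where the base valuations $0,2,3,4,3$ force the single edge of slope $3/4$ whose denominator $4$ does not divide $2$. For the rare cases in which no single prime yields a one-edge polygon, I would fall back on the general form of \lemref{lem_dumus}: record the slopes $a_i/b_i$ (in lowest terms) and widths of all edges of the minimal base polygon, note that a degree-$k$ factor would have to apportion each edge in multiples of $b_i$, and check that no choice of nonnegative multiples $m_i$ satisfies $\sum_i m_i b_i = k$ under the widths available after any admissible raising of interior points; for the degree-two entries this can additionally be controlled through \lemref{lem_1} and \lemref{lem_s>p^r}.

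The main obstacle is exactly the uniformity over the free interior coefficients $b_1,\dots,b_{n-1}$: computing a single Newton polygon is meaningless, because the polygon varies with the $b_j$. The one-edge criterion is what converts the problem into a statement insensitive to these choices, and the real content of the argument is to establish, triple by triple, that a prime realizing this criterion (or its refined multi-edge variant) exists. I expect the bulk of the effort to lie not in any one case but in organizing the search so that every member of the explicitly listed $\Omega$ is covered, which is why the verification is inherently finite and the set $\Omega$ must be given by an explicit enumeration rather than a closed-form condition.
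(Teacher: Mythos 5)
Your proposal is correct and follows essentially the same route as the paper: both fix a prime for each triple, use the fact that $G_1$ and $g_1$ share their extreme Newton-polygon vertices while the interior points of $G_1$ can only be raised, and then invoke Lemma~\ref{lem_dumus} to exclude a factor of degree $k$, reducing everything to a finite machine check. Your single-edge criterion plus the multi-edge fallback corresponds exactly to the paper's two worked examples $(6,3,47)$ (one edge) and $(10,5,5)$ (two edges, with a case split on whether the corner is raised).
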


\begin{proof}
Let $g_1(x) = n! g(x)$ where $g(x)$ defined in \eqref{new_eq_1} and $G_1(x) = n! G(x) $ where $G(x)$ is defined in \eqref{eq_G}.  We observe that the left most and the right most vertices of $g_1(x)$  and $G_1(x)$ are same since $|b_0| = |b_n| = 1$. 
By \lemref{lem_dumus}, we show that $G_1(x)$ does not have a factor of degree $k$ for all  $(n,k,s)$ given in $\Omega$.  We prove it for $(6,3,47)$ and $(10,5,5)$ and the details for the proofs of others in $\Omega$are similar. We are providing $(n,k,s,p)$ where $p$ is a prime number with respect to which \lemref{lem_dumus} has been applied for the Newton polygon of $G_1(x)$ and a complete explanation for two triples.
\begin{align}\label{new_eq_n<301}
\{& (4,2,2,2),(4,2,47,2),(6,2,4,5), (6,2,14,2), (6,2,79,5),(8,2,6,2),(8,2,13,7),(8,2,48,2), \\ &(8,2,62,2),  (9,2,26,3),(12,2,43,11), (16,2,19,5),(16,2,24,5), (16,2,79,5),   (18,2,16,17),  \nonumber\\ & (24,2,22,23), (32,2,30,31),  (48,2,46,47),   (54,2,52,53),
(64,2,62,7),  (72,2,70,71),  \nonumber\\ & (6,3,47,2), (8,3,6,2), (8,3,13,7), (9,3,5,2),(9,3,6,3),(9,3,13,3), (16,3,12,2), (16,3,13,2),  \nonumber\\ & (16,3,62,2),  (18,3,33,3),
(36,3,15,3),   (9,4,5,3), (9,4,6,3), (16,4,12,2),  (16,4,62,2),\nonumber\\ & (28,4,24,3),(81,4,77,79),(10,5,5,3)  \nonumber\}.
\end{align}

In the case $(n,k,s) = (6,3,47)$, we consider the Newton polygons of $g_1(x)$ and $G_1(x)$ with respect to a prime $2$. The Newton polygon of $g_1(x)$ is a single edge joining $(0,0)$ to $(6,7)$ with no lattice point. Therefore $g_1(x)$ is irreducible and the Newton polygon of $G_1(x)$ coincides with the Newton polygon of  $g_1(x)$. Hence $G_1(x)$ is irreducible and has no factor of degree $3$.

For the case $(n,k,s) = (10,5,5)$, we consider the Newton polygons of $g_1(x)$ and $G_1(x)$ with respect to a prime $3$. The Newton polygon of $g_1(x)$ consists of two edges, one joining $(0,0)$ to $ (9,4)$  and other joining $ (9,4)$ to $(10,5)$. Therefore  $g_1(x)$ has no factor of degree $5$ by \lemref{lem_dumus}. Let $3 | b_9$. Then the Newton polygons of $G_1(x)$ is a single edge joining $(0,0)$ to $ (10,5)$ having lattice points $(2,1), (4,2), (6,3)$ and $ (8,4)$. Therefore we may suppose that $3 \nmid b_9$.  Then the Newton polygon of $G_1(x)$ coincides with the Newton polygon of  $g_1(x)$ and hence it has no factor of degree $5$ by \lemref{lem_dumus}.
\end{proof}

\begin{lem}\label{lem_s>9_n>4k}
Let $n > 4k$ with $k \geq 2$ and $(n,k) \in T$ where $T$ is given by \lemref{new_lem_1}. Assume that $G(x)$ has a factor of degree $k$. Then $s > 9$.
\end{lem}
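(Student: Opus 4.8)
The plan is to read the statement as a finite verification. The hypothesis forces $(n,k)$ into the explicit finite set $T$ of \lemref{new_lem_1}, every member of which already satisfies $n>4k$; and since $s\ge 1$ throughout the paper, proving $s>9$ amounts to excluding $s\in\{1,\dots,9\}$. Thus it suffices to show that for each of the finitely many $(n,k)\in T$ and each $s\in\{1,\dots,9\}$ the polynomial $G(x)=G(x,n,s)$ has no factor of degree $k$. I would run this check prime by prime, using two instruments.

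The first instrument is \lemref{lem_1} with $a=0$. For a fixed triple $(n,k,s)$ I would factor ${}_{k}(n)=n(n-1)\cdots(n-k+1)$ and test the primes $p>k$ dividing it against $a_0=\binom{n+s}{n}=\frac{(n+1)\cdots(n+s)}{s!}$. If some such $p$ satisfies $p\nmid a_0$, then, since $|\pi_0|=|a_0|$ and $|\pi_n|=1$, \lemref{lem_1} would force $p\mid \pi_0\pi_n=\pm a_0$, a contradiction, and that triple is settled. The test $p\nmid a_0$ is the one-line valuation identity $\nu_p\big((n+1)\cdots(n+s)\big)=\nu_p(s!)$. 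This in fact disposes of the overwhelming majority of cases: if ${}_{k}(n)$ has any prime factor $p>k-1+s$, then $p\nmid a_0$ automatically, because for $p\mid(n-i)$ with $0\le i\le k-1$ one has $n+j\equiv i+j\not\equiv 0\pmod p$ for all $1\le j\le s$, while $\nu_p(s!)=0$. Hence only those $(n,k)\in T$ whose block ${}_{k}(n)$ is $(k-1+s)$-smooth, and all of whose medium prime factors happen to divide $a_0$, can survive this first pass.

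For this residual family \lemref{lem_1} is inconclusive, and I would pass to the Newton-polygon method, which is where the promised interplay of \lemref{lem_1} with \lemref{lem_3} enters. Writing $g_1(x)=n!\,g(x)=\sum_j c_j x^j$, which is monic since $c_n=1$, I would pick a prime $p$ and try to verify the hypotheses of Filaseta's \lemref{lem_3} with $l=k-1$: that $p\nmid c_n$ (automatic), that $p\mid c_j$ for $0\le j\le n-k$, and that the right-most edge of the Newton polygon of $g_1$ with respect to $p$ has slope less than $1/k$. Because $|b_0|=|b_n|=1$, \lemref{lem_3} then excludes a factor of degree in $[l+1,k]=\{k\}$ for \emph{every} admissible choice of the $b_j$, that is, for the full generality of $G$. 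When the divisibility or slope conditions fail to hold in this clean form, I would argue instead with Dumas's \lemref{lem_dumus}, inspecting the admissible translates of the edges of the Newton polygons of the two putative factors exactly as in the proof of \lemref{lem_dum_exp}; the equality $|b_0|=|b_n|=1$ forces $g_1$ and $G_1=n!\,G(x)$ to share their outermost Newton-polygon vertices, so each such conclusion transfers from $g$ to $G$.

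I expect the genuine obstacle to be precisely this residual set of smooth triples, where the single-prime bound of \lemref{lem_1} gives nothing and the outcome hinges on the fine shape of the Newton polygon together with its stability under the unknown coefficients $b_j$. The remaining work is finite but must be organised triple by triple and is carried out by SAGE; the role of the present lemma is to serve as the worked template for the interleaving of \lemref{lem_1}, \lemref{lem_3} and \lemref{lem_dumus} that the later proofs repeatedly invoke.
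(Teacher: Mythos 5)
Your proposal is correct and follows essentially the same route as the paper: a finite check over $(n,k)\in T$ and $s\in\{1,\dots,9\}$, disposing of most triples by \lemref{lem_1} with $a=0$ (the paper's Tables 1 and 3) and handling the residual pairs --- which occur only for $k=2$, namely the set $S$ with $n\in\{9,15,49\}$ --- by \lemref{lem_3} applied to $g_1=n!\,g$ with $l=1=k-1$ and a suitable prime (the paper's Table 2). The only cosmetic difference is that you also hold Dumas's \lemref{lem_dumus} in reserve, which the paper does not need for this particular lemma.
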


\begin{proof}
Let $n > 4k$. Assume that $s \leq 9$.  Suppose $G(x)$, as defined  in \eqref{eq_G}, has a factor of degree $k \geq 2$ and we shall arrive at a contradiction.  Let $(n,k) \in T$ where $T$ is given by \lemref{new_lem_1}. Suppose $k =2$. For showing $G(x)$ has no factor of degree $2$, we need to show the existence of a prime $p_0$ such that $p_0 | {}_{k}(n)$ with $p_0$ does not divide $\frac{{}_s(n+s)}{s!}$ by \lemref{lem_1}. The following Table \ref{tab; table_k=2} is the list of all $(n,s) \in T$ which do not satisfy \lemref{lem_1} together with a prime $p$ by which \lemref{lem_1} has been applied. 

\begin{center}
\begin{longtable}{|c|c|c||c|c|c||c|c|c||c|c|c||c|c|c||c|c|c|}
\caption{For pairs in $T$ with $k =2$ not satisfying \lemref{lem_1}.}
 \label{tab; table_k=2}\\
\hline
$n$ & $s$ &$p$ & $n$ & $s$ &$p$& $n$ & $s$ & $p$ & $n$ & $s$ & $p$ & $n$ & $s$ & $p$& $n$ & $s$ & $p$\\
\hline
\endfirsthead
\multicolumn{18}{c}%
{\tablename\ \thetable\ -- \textit{For pairs in $T$ with $k =2$ not satisfying \lemref{lem_1} } }\\
\hline
$n$ & $s$ &$p$ & $n$ & $s$ &$p$& $n$ & $s$ & $p$ & $n$ & $s$ & $p$ & $n$ & $s$ & $p$& $n$ & $s$ & $p$\\
\hline
\endhead
\hline \multicolumn{18}{r}{\textit{For pairs in $T$ with $k =2$ not satisfying \lemref{lem_1}} }\\
\endfoot
\hline
\endlastfoot
$9$ & $1$ & $3$& $9$ & $2$ & $3$& $9$ & $3$ & $3$& $9$ & $4$ & $3$& $9$ & $5$ & $3$& $9$ & $6$ & $3$\\ \hline
$9$ & $7$ & $3$& $9$ & $8$ & $3$& $9$ & $9$ & $3$& $14$ & $1$ & $13$& $14$ & $2$ & $13$& $14$ & $3$ & $13$\\ \hline
$14$ & $4$ & $13$& $14$ & $5$ & $13$& $14$ & $6$ & $13$& $14$ & $7$ & $13$& $14$ & $8$ & $13$& $14$ & $9$ & $13$\\ \hline
$15$ & $1$ & $7$& $15$ & $2$ & $7$& $15$ & $3$ & $7$& $15$ & $4$ & $7$& $15$ & $5$ & $7$& $15$ & $6$ & $5$\\ \hline
$15$ & $7$ & $7$& $15$ & $8$ & $7$& $15$ & $9$ & $7$& $20$ & $1$ & $19$& $20$ & $2$ & $19$& $20$ & $3$ & $19$\\ \hline
$20$ & $4$ & $19$& $20$ & $5$ & $19$& $20$ & $6$ & $19$& $20$ & $7$ & $19$& $20$ & $8$ & $19$& $20$ & $9$ & $19$\\ \hline
$24$ & $1$ & $23$& $24$ & $2$ & $23$& $24$ & $3$ & $23$& $24$ & $4$ & $23$& $24$ & $5$ & $23$& $24$ & $6$ & $23$\\ \hline
$24$ & $7$ & $23$& $24$ & $8$ & $23$& $24$ & $9$ & $23$& $27$ & $1$ & $13$& $27$ & $2$ & $13$& $27$ & $3$ & $13$\\ \hline
$27$ & $4$ & $13$& $27$ & $5$ & $13$& $27$ & $6$ & $13$& $27$ & $7$ & $13$& $27$ & $8$ & $13$& $27$ & $9$ & $13$\\ \hline
$35$ & $1$ & $17$& $35$ & $2$ & $17$& $35$ & $3$ & $17$& $35$ & $4$ & $17$& $35$ & $5$ & $17$& $35$ & $6$ & $17$\\ \hline
$35$ & $7$ & $17$& $35$ & $8$ & $17$& $35$ & $9$ & $17$& $48$ & $1$ & $47$& $48$ & $2$ & $47$& $48$ & $3$ & $47$\\ \hline
$48$ & $4$ & $47$& $48$ & $5$ & $47$& $48$ & $6$ & $47$& $48$ & $7$ & $47$& $48$ & $8$ & $47$& $48$ & $9$ & $47$\\ \hline
$49$ & $1$ & $7$& $49$ & $2$ & $7$& $49$ & $3$ & $7$& $49$ & $4$ & $7$& $49$ & $5$ & $7$& $49$ & $6$ & $7$\\ \hline
$49$ & $7$ & $7$& $49$ & $8$ & $7$& $49$ & $9$ & $7$& $63$ & $1$ & $31$& $63$ & $2$ & $31$& $63$ & $3$ & $31$\\ \hline
$63$ & $4$ & $31$& $63$ & $5$ & $31$& $63$ & $6$ & $31$& $63$ & $7$ & $31$& $63$ & $8$ & $31$& $63$ & $9$ & $31$\\ \hline
$80$ & $1$ & $79$& $80$ & $2$ & $79$& $80$ & $3$ & $79$& $80$ & $4$ & $79$& $80$ & $5$ & $79$& $80$ & $6$ & $79$\\ \hline
$80$ & $7$ & $79$& $80$ & $8$ & $79$& $80$ & $9$ & $79$& $125$ & $1$ & $31$& $125$ & $2$ & $31$& $125$ & $3$ & $31$\\ \hline
$125$ & $4$ & $31$& $125$ & $5$ & $31$& $125$ & $6$ & $31$& $125$ & $7$ & $31$& $125$ & $8$ & $31$& $125$ & $9$ & $31$\\ \hline
$224$ & $1$ & $223$& $224$ & $2$ & $223$& $224$ & $3$ & $223$& $224$ & $4$ & $223$& $224$ & $5$ & $223$& $224$ & $6$ & $223$\\ \hline
$224$ & $7$ & $223$& $224$ & $8$ & $223$& $224$ & $9$ & $223$& $2400$ & $1$ & $2399$& $2400$ & $2$ & $2399$& $2400$ & $3$ & $2399$\\ \hline
$2400$ & $4$ & $2399$& $2400$ & $5$ & $2399$& $2400$ & $6$ & $2399$& $2400$ & $7$ & $2399$& $2400$ & $8$ & $2399$& $2400$ & $9$ & $2399$\\ \hline
$4374$ & $1$ & $4373$& $4374$ & $2$ & $4373$& $4374$ & $3$ & $4373$& $4374$ & $4$ & $4373$& $4374$ & $5$ & $4373$& $4374$ & $6$ & $4373$\\ \hline
$4374$ & $7$ & $4373$& $4374$ & $8$ & $4373$& $4374$ & $9$ & $4373$&&&&&&&&&
\end{longtable}
\end{center}

Denote by $S$ the set of all pairs $(n,s) $ with $1 \leq s \leq 9$ and $ (n,2) \in T$ satisfying \lemref{lem_1}. Then
\begin{align*}
S = \{ & (9,3), (9,4),(9,5), (9,6), (9,7), (9,8), (9,9),(15,6), (15,7),(15,8),(15,9),(49,7),\\ &(49,8),(49,9)\}.
\end{align*}
We consider 
\begin{equation}\label{eq_cj}
g_1(x) = n!g(x) = \sum\limits_{j=0}^{n}  \frac{n!}{j!} a_j x^j = \sum\limits_{j=0}^{n}  c_j x^j  \quad \text{where} \quad c_j = \frac{n!}{j!} a_j \in \mbb Z.
\end{equation}
We check that \lemref{lem_3} holds with $h(x) = g_1(x)$ and $l=1$  and a suitable prime $p$ for each pair in $S$. This contradicts that $G(x)$ has a factor of degree $k=2$. The value of $p$ for each pair in the set $S$ satisfying \lemref{lem_3}  is given by the following Table \ref{my-label}.

\smallskip
\begin{table}[!h]
  \centering
  \caption{Each pair in $S$ is satisfying \lemref{lem_3}}
  \label{my-label}
 \begin{adjustbox}{width=\textwidth}
\begin{tabular}{|c|c|c|}
\hline 
 $n$ & $s$ & $p$ \\ 
\hline 
$9$ & $3$ & $3$ \\ 
\hline 
$9$ & $4$ & $3$ \\ 
\hline 
\end{tabular}
\begin{tabular}{|c|c|c|}
\hline 
 $n$ & $s$ & $p$ \\ 
\hline 
$9$ & $5$ & $3$ \\ 
\hline 
 $9$ & $6$ & $3$ \\ 
\hline 
\end{tabular}
\begin{tabular}{|c|c|c|}
\hline 
 $n$ & $s$ & $p$ \\ 
\hline 
 $9$ & $7$ & $3$ \\ 
\hline 
 $9$ & $8$ & $3$ \\ 
\hline 
\end{tabular}
\begin{tabular}{|c|c|c|}
\hline 
 $n$ & $s$ & $p$ \\ 
\hline 
 $9$ & $9$ & $3$ \\ 
\hline 
 $15$ & $6$ & $5$\\ 
\hline   
\end{tabular}
\begin{tabular}{|c|c|c|}
\hline 
 $n$ & $s$ & $p$ \\ 
\hline 
$15$ & $7$ & $5$\\ 
\hline 
 $15$ & $8$ & $5$\\ 
\hline 
\end{tabular}
\begin{tabular}{|c|c|c|c|}
\hline 
 $n$ & $s$ & $p$ \\ 
\hline 
$15$ & $9$ & $3$ \\ 
\hline
$49$ & $7$ & $7$\\ 
\hline   
\end{tabular}
\begin{tabular}{|c|c|c|c|}
\hline 
 $n$ & $s$ & $p$ \\ 
\hline 
 $49$ & $8$ & $5$\\ 
\hline 
 $49$ & $9$ & $5$ \\ 
\hline  
\end{tabular}
\end{adjustbox}
\end{table}

Let $k >2$. We check that all the pairs in $T$  do not satisfy \lemref{lem_1}, see Table \ref{tab;table_2.1} where the value of $p$ with which \lemref{lem_1} has been applied is given. Now the assertion of \lemref{lem_s>9_n>4k} follows immediately.

%\smallskip
\begin{center}
\begin{longtable}{|c|c|c|c||c|c|c|c||c|c|c|c||c|c|c|c||c|c|c|c||c|c|c|c|}
\caption{}
 \label{tab;table_2.1}\\
\hline
$n$ &$k $&  $s$ & $p$ & $n$ & $k $&  $s$ & $p$ & $n$ & $k $&  $s$ & $p$ & $n$ & $k $&  $s$ & $p$ & $n$ & $k $ &  $s$ & $p$& $n$ & $k $&  $s$ & $p$\\
\hline
\endfirsthead
\multicolumn{24}{c}%
{\tablename\ \thetable\ } \\
\hline
$n$ &$k $&  $s$ & $p$ & $n$ & $k $&  $s$ & $p$ & $n$ & $k $&  $s$ & $p$ & $n$ & $k $&  $s$ & $p$ & $n$ & $k $ &  $s$ & $p$& $n$ & $k $&  $s$ & $p$\\
\hline
\endhead
\hline \multicolumn{24}{r}{\textit{}} \\
\endfoot
\hline
\endlastfoot
$13$ & $3$ & $1$ & $11$& $13$ & $3$ & $2$ & $11$& $13$ & $3$ & $3$ & $11$& $13$ & $3$ & $4$ & $11$& $13$ & $3$ & $5$ & $11$& $13$ & $3$ & $6$ & $11$ \\ \hline
$13$ & $3$ & $7$ & $11$& $13$ & $3$ & $8$ & $11$& $13$ & $3$ & $9$ & $13$& $14$ & $3$ & $1$ & $7$& $14$ & $3$ & $2$ & $7$& $14$ & $3$ & $3$ & $7$ \\ \hline
$14$ & $3$ & $4$ & $7$& $14$ & $3$ & $5$ & $7$& $14$ & $3$ & $6$ & $7$& $14$ & $3$ & $7$ & $13$& $14$ & $3$ & $8$ & $13$& $14$ & $3$ & $9$ & $13$ \\ \hline
$20$ & $3$ & $1$ & $5$& $20$ & $3$ & $2$ & $5$& $20$ & $3$ & $3$ & $5$& $20$ & $3$ & $4$ & $5$& $20$ & $3$ & $5$ & $19$& $20$ & $3$ & $6$ & $19$ \\ \hline
$20$ & $3$ & $7$ & $19$& $20$ & $3$ & $8$ & $19$& $20$ & $3$ & $9$ & $19$& $24$ & $3$ & $1$ & $11$& $24$ & $3$ & $2$ & $11$& $24$ & $3$ & $3$ & $11$ \\ \hline
$24$ & $3$ & $4$ & $11$& $24$ & $3$ & $5$ & $11$& $24$ & $3$ & $6$ & $11$& $24$ & $3$ & $7$ & $11$& $24$ & $3$ & $8$ & $11$& $24$ & $3$ & $9$ & $23$ \\ \hline
$25$ & $3$ & $1$ & $5$& $25$ & $3$ & $2$ & $5$& $25$ & $3$ & $3$ & $5$& $25$ & $3$ & $4$ & $5$& $25$ & $3$ & $5$ & $23$& $25$ & $3$ & $6$ & $23$ \\ \hline
$25$ & $3$ & $7$ & $23$& $25$ & $3$ & $8$ & $23$& $25$ & $3$ & $9$ & $23$& $26$ & $3$ & $1$ & $5$& $26$ & $3$ & $2$ & $5$& $26$ & $3$ & $3$ & $5$ \\ \hline
$26$ & $3$ & $4$ & $13$& $26$ & $3$ & $5$ & $13$& $26$ & $3$ & $6$ & $13$& $26$ & $3$ & $7$ & $13$& $26$ & $3$ & $8$ & $13$& $26$ & $3$ & $9$ & $13$ \\ \hline
$48$ & $3$ & $1$ & $23$& $48$ & $3$ & $2$ & $23$& $48$ & $3$ & $3$ & $23$& $48$ & $3$ & $4$ & $23$& $48$ & $3$ & $5$ & $23$& $48$ & $3$ & $6$ & $23$ \\ \hline
$48$ & $3$ & $7$ & $23$& $48$ & $3$ & $8$ & $23$& $48$ & $3$ & $9$ & $23$& $54$ & $3$ & $1$ & $13$& $54$ & $3$ & $2$ & $13$& $54$ & $3$ & $3$ & $13$ \\ \hline
$54$ & $3$ & $4$ & $13$& $54$ & $3$ & $5$ & $13$& $54$ & $3$ & $6$ & $13$& $54$ & $3$ & $7$ & $13$& $54$ & $3$ & $8$ & $13$& $54$ & $3$ & $9$ & $13$ \\ \hline
$63$ & $3$ & $1$ & $7$& $63$ & $3$ & $2$ & $7$& $63$ & $3$ & $3$ & $7$& $63$ & $3$ & $4$ & $7$& $63$ & $3$ & $5$ & $7$& $63$ & $3$ & $6$ & $7$ \\ \hline
$63$ & $3$ & $7$ & $31$& $63$ & $3$ & $8$ & $31$& $63$ & $3$ & $9$ & $31$& $64$ & $3$ & $1$ & $7$& $64$ & $3$ & $2$ & $7$& $64$ & $3$ & $3$ & $7$ \\ \hline
$64$ & $3$ & $4$ & $7$& $64$ & $3$ & $5$ & $7$& $64$ & $3$ & $6$ & $31$& $64$ & $3$ & $7$ & $31$& $64$ & $3$ & $8$ & $31$& $64$ & $3$ & $9$ & $31$ \\ \hline
$98$ & $3$ & $1$ & $7$& $98$ & $3$ & $2$ & $7$& $98$ & $3$ & $3$ & $7$& $98$ & $3$ & $4$ & $7$& $98$ & $3$ & $5$ & $7$& $98$ & $3$ & $6$ & $7$ \\ \hline
$98$ & $3$ & $7$ & $97$& $98$ & $3$ & $8$ & $97$& $98$ & $3$ & $9$ & $97$& $350$ & $3$ & $1$ & $5$& $350$ & $3$ & $2$ & $5$& $350$ & $3$ & $3$ & $5$ \\ \hline
$350$ & $3$ & $4$ & $5$& $350$ & $3$ & $5$ & $7$& $350$ & $3$ & $6$ & $7$& $350$ & $3$ & $7$ & $29$& $350$ & $3$ & $8$ & $29$& $350$ & $3$ & $9$ & $29$ \\ \hline
$24$ & $4$ & $1$ & $7$& $24$ & $4$ & $2$ & $7$& $24$ & $4$ & $3$ & $7$& $24$ & $4$ & $4$ & $11$& $24$ & $4$ & $5$ & $11$& $24$ & $4$ & $6$ & $11$ \\ \hline
$24$ & $4$ & $7$ & $11$& $24$ & $4$ & $8$ & $11$& $24$ & $4$ & $9$ & $23$& $25$ & $4$ & $1$ & $5$& $25$ & $4$ & $2$ & $5$& $25$ & $4$ & $3$ & $5$ \\ \hline
$25$ & $4$ & $4$ & $5$& $25$ & $4$ & $5$ & $11$& $25$ & $4$ & $6$ & $11$& $25$ & $4$ & $7$ & $11$& $25$ & $4$ & $8$ & $23$& $25$ & $4$ & $9$ & $23$ \\ \hline
$32$ & $4$ & $1$ & $5$& $32$ & $4$ & $2$ & $5$& $32$ & $4$ & $3$ & $29$& $32$ & $4$ & $4$ & $29$& $32$ & $4$ & $5$ & $29$& $32$ & $4$ & $6$ & $29$ \\ \hline
$32$ & $4$ & $7$ & $29$& $32$ & $4$ & $8$ & $29$& $32$ & $4$ & $9$ & $29$& $33$ & $4$ & $1$ & $5$& $33$ & $4$ & $2$ & $11$& $33$ & $4$ & $3$ & $11$ \\ \hline
$33$ & $4$ & $4$ & $11$& $33$ & $4$ & $5$ & $11$& $33$ & $4$ & $6$ & $11$& $33$ & $4$ & $7$ & $11$& $33$ & $4$ & $8$ & $11$& $33$ & $4$ & $9$ & $11$ \\ \hline
$48$ & $4$ & $1$ & $5$& $48$ & $4$ & $2$ & $23$& $48$ & $4$ & $3$ & $23$& $48$ & $4$ & $4$ & $23$& $48$ & $4$ & $5$ & $23$& $48$ & $4$ & $6$ & $23$ \\ \hline
$48$ & $4$ & $7$ & $23$& $48$ & $4$ & $8$ & $23$& $48$ & $4$ & $9$ & $23$& $49$ & $4$ & $1$ & $7$& $49$ & $4$ & $2$ & $7$& $49$ & $4$ & $3$ & $7$ \\ \hline
$49$ & $4$ & $4$ & $7$& $49$ & $4$ & $5$ & $7$& $49$ & $4$ & $6$ & $7$& $49$ & $4$ & $7$ & $23$& $49$ & $4$ & $8$ & $23$& $49$ & $4$ & $9$ & $23$ \\ \hline
$63$ & $4$ & $1$ & $5$& $63$ & $4$ & $2$ & $7$& $63$ & $4$ & $3$ & $7$& $63$ & $4$ & $4$ & $7$& $63$ & $4$ & $5$ & $7$& $63$ & $4$ & $6$ & $7$ \\ \hline
$63$ & $4$ & $7$ & $31$& $63$ & $4$ & $8$ & $31$& $63$ & $4$ & $9$ & $31$& 
$24$ & $5$ & $1$ & $7$& $24$ & $5$ & $2$ & $7$& $24$ & $5$ & $3$ & $7$ \\ \hline
$24$ & $5$ & $4$ & $11$& $24$ & $5$ & $5$ & $11$& $24$ & $5$ & $6$ & $11$& 
$24$ & $5$ & $7$ & $11$& $24$ & $5$ & $8$ & $11$& $24$ & $5$ & $9$ & $23$\\ \hline $32$ & $5$ & $1$ & $7$& $32$ & $5$ & $2$ & $7$& $32$ & $5$ & $3$ & $29$ &
$32$ & $5$ & $4$ & $29$& $32$ & $5$ & $5$ & $29$& $32$ & $5$ & $6$ & $29$\\ \hline $32$ & $5$ & $7$ & $29$& $32$ & $5$ & $8$ & $29$& $32$ & $5$ & $9$ & $29$& 
$48$ & $5$ & $1$ & $11$& $48$ & $5$ & $2$ & $11$& $48$ & $5$ & $3$ & $11$\\ \hline $48$ & $5$ & $4$ & $11$& $48$ & $5$ & $5$ & $11$& $48$ & $5$ & $6$ & $11$  &
$48$ & $5$ & $7$ & $23$& $48$ & $5$ & $8$ & $23$& $48$ & $5$ & $9$ & $23$\\ \hline 
$29$ & $7$ & $1$ & $13$& $29$ & $7$ & $2$ & $13$& $29$ & $7$ & $3$ & $13$& $29$ & $7$ & $4$ & $13$& $29$ & $7$ & $5$ & $13$& $29$ & $7$ & $6$ & $13$ \\ \hline
$29$ & $7$ & $7$ & $13$& $29$ & $7$ & $8$ & $13$& $29$ & $7$ & $9$ & $13$& $30$ & $7$ & $1$ & $13$& $30$ & $7$ & $2$ & $13$& $30$ & $7$ & $3$ & $13$ \\ \hline
$30$ & $7$ & $4$ & $13$& $30$ & $7$ & $5$ & $13$& $30$ & $7$ & $6$ & $13$& $30$ & $7$ & $7$ & $13$& $30$ & $7$ & $8$ & $13$& $30$ & $7$ & $9$ & $29$ \\ 
\end{longtable}
\end{center} 
\end{proof}
The proof of \lemref{lem_s>9_n>4k} depends on combining \lemref{lem_1} and \lemref{lem_3}. This will be the basis of several proofs in this paper. We shall refer this argument as \textit{combining \lemref{lem_1} and \lemref{lem_3}}. We have given above all the details required for combining \lemref{lem_1} and \lemref{lem_3}. We may skip these details at several places in future and leave them to the reader.

\begin{lem}\label{lem2k_le_n_le_4k}
Assume that $G(x)$ has a factor of degree $k \geq 2$. Let $2k \leq n \leq 4k$ and $1 \leq s \leq 92$. Then
\begin{align*}
& (n,k,s) \in  \{ (4,2,7),(4,2,23),   (9,3,47), (10,5,4) \nonumber\}.
\end{align*}
\end{lem}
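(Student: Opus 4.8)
The plan is to shrink the problem to a finite search and then eliminate every surviving triple except the four listed, using the technique of \emph{combining \lemref{lem_1} and \lemref{lem_3}} set up in the proof of \lemref{lem_s>9_n>4k}.

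\emph{Reduction to finitely many $(n,k,s)$.} I would first bound $n$ by feeding \lemref{lem_ShSi} with $\theta=\tfrac{1}{39}$, equivalently $\phi=\tfrac{1}{40}$ (admissible since $\phi<\tfrac{1}{9}$ and $m_0(\tfrac{1}{39})=800$ by \lemref{lem_prime}(a)). Here $2(1-2\phi)k=1.9k$ and the threshold is $n\ge\tfrac{800}{1-1/40}=\tfrac{32000}{39}<821$. Hence if $n\ge 821$, a factor of degree $k$ forces $s>1.9k$, so $s\le 92$ gives $k\le 48$ and therefore $n\le 4k\le 192$, a contradiction. Thus $n\le 820$, and with $2k\le n\le 4k$ only finitely many $(n,k,s)$ with $s\le 92$ survive. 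On the sub-range $n\ge\max(2k+13,\tfrac{541}{262}k)$ I would trim further: \lemref{lem_max_n} gives a prime $P=P({}_k(n))>2k$, and since $2k<P\le n\le 4k<2P$ we must have $P=n-i$ with $0\le i\le k-1$; by \lemref{lem_1} a factor of degree $k$ then needs $P\mid\frac{(n+1)\cdots(n+s)}{s!}$, forcing $P\le(k-1)+s$ whenever $P>s$, which with $P>2k$ yields $s\ge k+2$ (and $k\le 45$ in the residual case $P\le s$), so $k\le 90$ there.

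\emph{The elimination engine.} Fix $(n,k,s)$ in the finite range. By \lemref{lem_1} with $a=0$, using $|\pi_0|=|a_0|=\frac{(n+1)\cdots(n+s)}{s!}$ and $|\pi_n|=1$, the polynomial $G$ has a factor of degree $k$ only if every prime $p>k$ dividing ${}_k(n)=n(n-1)\cdots(n-k+1)$ also divides $\frac{(n+1)\cdots(n+s)}{s!}$. So it suffices to produce one prime $p>k$ with $p\mid{}_k(n)$ and $p\nmid\frac{(n+1)\cdots(n+s)}{s!}$. Take $p=P({}_k(n))$; the Sylvester--Erd\H{o}s-type bounds of \lemref{new_lem_lash4}, \lemref{lem_max_n}, and (for $k\ge 9$) the computational \lemref{lem_new_4} force $p$ to be large. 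Writing $p=n-i$, the divisibility $p\mid\frac{(n+1)\cdots(n+s)}{s!}$ can hold only if $p\in\{n+1,\dots,n+s\}$, i.e. $p\mid(i+j)$ for some $1\le j\le s$, equivalently $p\le(k-1)+s$, which is easy to test for the finitely many $p$ that remain. Whenever $p>(k-1)+s$, \lemref{lem_1} excludes a factor of degree $k$, disposing of the bulk of the range.

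\emph{Residual cases and main obstacle.} The triples that survive the engine are those where $P({}_k(n))$ is too small; these cluster near the diagonal $n\approx 2k$ and at small $(n,k)$. For $k=2$ (so $4\le n\le 8$) I would use \lemref{lem_s>p^r}, converting a prime power $p^r\mid(n-\delta)$ into $s\ge p^r-\delta$, which together with \lemref{lem_1} leaves only $(4,2,7)$ and $(4,2,23)$. The remaining finitely many triples are settled by the Newton--polygon computations of \lemref{lem_dum_exp}: its exceptional set $\Omega$ is precisely the list of $2k\le n\le 4k$ triples that evade \lemref{lem_1} but are annihilated by \lemref{lem_3} and \lemref{lem_dumus}. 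After removing $\Omega$ and the excluded $k=2$ cases, the only $(n,k,s)$ with $2k\le n\le 4k$, $s\le 92$ and a possible factor of degree $k$ are $(4,2,7),(4,2,23),(9,3,47),(10,5,4)$, each satisfying $2k\le n\le 4k$ (e.g. $n=2k=10$ for $(10,5,4)$). The crux is exactly the thin strip $2k\le n<\max(2k+13,\tfrac{541}{262}k)$ with $k$ large: there ${}_k(n)$ lies in $[\,n-k+1,n\,]\approx[k+1,2k]$ and may carry no prime factor exceeding $2k$ (or even exceeding $s$), so \lemref{lem_1} loses its force and one must combine the sharp prime-factor input of \lemref{lem_new_4} with the case-by-case Newton-polygon arguments of \lemref{lem_dum_exp}; making this bookkeeping exhaustive, so that precisely the four advertised triples remain, is the delicate part.
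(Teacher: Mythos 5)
Your reduction to $n\le 820$ via \lemref{lem_ShSi} with $\phi=\tfrac{1}{40}$ is valid and is a genuinely different opening move from the paper, which never invokes \lemref{lem_ShSi} in this lemma: instead it splits $[2k,4k]$ into $[3k,4k]$, $[2.5k,3k)$, $[2k+93,2.5k)$ and $[2k,2k+92]$, bounding $k$ separately on each piece ($k\le 92$, $k<184$, no survivors, $k\le 105$, respectively) before running the \lemref{lem_1}/\lemref{lem_3} elimination. Your route buys a one-line bound on $n$ at the price of a much larger residual search (up to $k\le 410$), which the paper's case split avoids. The elimination engine itself (test $p=P({}_k(n))$ against $(k-1)+s$, then fall back on Newton polygons and \lemref{lem_dum_exp}) is the same as the paper's.

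The genuine gap is your treatment of the strip $2k\le n<2k+13$ with $k$ large, which you single out as ``the delicate part'' and propose to settle by combining \lemref{lem_new_4} with the case-by-case computations of \lemref{lem_dum_exp}. Neither tool can do the job there: \lemref{lem_new_4} only yields $P({}_k(n))\ge 100$, which is useless once $k>100$ because \lemref{lem_1} requires a prime $p>k$, and \lemref{lem_dum_exp} is a fixed finite list containing no triple with $k>5$, so it cannot absorb the hundreds of large-$k$ cases your reduction leaves alive. Moreover your diagnosis that \lemref{lem_1} ``loses its force'' on this strip is incorrect. For $n\le 2k+92$ the interval $[n-k+1,n]$ contains $[k+93,2k]$, which by \lemref{lem_prime}(b) contains a prime $p_0$ once $k>105$; then $p_0>k$ and $p_0>92\ge s$, so \lemref{lem_1} forces $k+93\le p_0\le (k-1)+s\le k+91$, a contradiction. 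This is precisely how the paper disposes of $n\in[2k,2k+92]$ and reduces the strip to $k\le 105$, after which the finite check applies. A smaller inaccuracy: for $k=2$ you assert that \lemref{lem_s>p^r} together with \lemref{lem_1} leaves only $(4,2,7)$ and $(4,2,23)$; it does not (for instance $(4,2,2)$, $(4,2,47)$ and $(6,2,4)$ survive both and are removed only by the Newton-polygon arguments of \lemref{lem_dum_exp}), and \lemref{lem_s>p^r} is in fact not needed in this lemma at all --- the paper reserves it for the $n>300$ cases of \lemref{lem_n<100}.
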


\begin{proof}
Let $3k \leq n \leq 4k$ with $k > 92$. Then by \lemref{lem_max_n}, we have a prime $p > 2 k $ dividing  $n-i$ for some $0 \leq i \leq k-1$. Then we derive from \lemref{lem_1}  that $2k < p < k+s$ which implies that
$$
92 < k < s \leq 92
$$
and this is a contradiction. Let $k \leq 92$. Let $S_1$ be the set consisting of triples $(n,k,s)$ with $1 \leq s \leq 92$ satisfying \lemref{lem_1}. We see that the cardinality of $S_1$  is $148$.
Further we check that every pair in $S_1$ satisfies \lemref{lem_3} except for $(n, k,s) \in S_2$ where
\begin{align}\label{eq_excep_1}
S_2= \{& (6,2,4),(6,2,14),(6,2,79),(8,2,6),(8,2,13),(8,2,48),(8,2,62),(9,3,5),(9,3,6), \\& (9,3,13),  (9,3,47), (16,4,12), (16,4,62) \nonumber\}.
\end{align}
\noindent
Thus $S_2$ is obtained by  combining \lemref{lem_1} and \lemref{lem_3}.  We give the  details of applying \lemref{lem_3} for $(9,3,12)$ as follows. The details for other pairs are similar.  Consider 
\begin{equation}\label{eq_cj}
g_1(x) = n!g(x) = \sum\limits_{j=0}^{n}  \frac{n!}{j!} a_j x^j = \sum\limits_{j=0}^{n}  c_j x^j  \quad \text{where} \quad c_j = \frac{n!}{j!} a_j \in \mbb Z.
\end{equation}
We check that \lemref{lem_3} holds with $h(x) = g_1(x)$ and $l=1$  and a suitable prime $p$ for each pair in $S_2$. 
\begin{align*}
c_0  & =2^8 * 3^4 * 5^2 * 7^2 * 13 * 17 * 19, 
\hspace*{5 mm}
c_1  = 2^8 * 3^5 * 5^2 * 7 * 13 * 17 * 19, \\
c_2 & = 2^8 * 3^5 * 5 * 7 * 13 * 17 * 19,
\hspace*{5 mm}
c_3  = 2^8 * 3^4 * 5 * 7^2 * 13 * 17,\\
c_4 & =2^6 * 3^3 * 5 * 7^2 * 13 * 17,
\hspace*{5 mm}
c_5  = 2^6 * 3^3 * 5 * 7^2 * 13,
\hspace*{5 mm}
c_6  =2^3 * 3^2 * 5 * 7^2 * 13,\\
c_7 & =2^3 * 3^2 * 7 * 13,
\hspace*{5 mm}
c_8  =  3^2 *13
\hspace*{5 mm}
c_9  = 1.
\end{align*}
The primes $q$ which divides $c_j$ for $0 \leq j \leq 6$ are $2, 3,5,7$ and $13$. Now we consider the Newton polygon with respect to primes $2, 3, 5, 7$ and $13$ which are given by
\begin{align*}
NP_{2}(f) & = \{ (0, 0), (1,0),  (9,8) \} 
\hspace*{5 mm}
NP_{3}(f) = \{ (0, 0),  (9,4) \},
\hspace*{5 mm}
NP_{13}(f)  = \{ (0, 0),  (9, 1) \}, \\
NP_{5}(f) & = \{ (0, 0), (2,0), (7, 1),  (9, 2) \} 
\hspace*{5 mm}
NP_{7}(f) = \{ (0, 0), (1, 0), (8,1), (9, 2) \}.
\end{align*}
We see that the right most edge has slope less than $1/3$ for $q =13$.  \lemref{lem_3} is also applied in the proof of last lemma.

\smallskip
Let $2.5k \leq n < 3k$. 
First we argue when every term in $n(n-1)\dots (n-k+1)$ is a composite number.  We consider odd terms in $n(n-1)\dots (n-k+1)$ and denote by $\Delta$ the product of these odd terms. They satisfy
$$
\substack{\text{ Number of odd terms in} \atop (n-k+1)\dots (n-1)n } \geq  \begin{cases}
                  \frac{k}{2} &\text{if $k$ is  even ,}\\
                  \frac{k-1}{2} &\text{if $k$ is odd}.
                  \end{cases}
$$
Further each can be written as $n-\delta -2j$ where $\delta \in \{ 0,1 \}$ according as $n$ odd or even, respectively, and $j \geq 0$ is an integer. %Each term can be written as $pq$ where $p \geq 3$ is the least prime dividing the term
Each term can be written as $ pq$  where $p \geq 3$ is the least prime dividing $n-\delta - 2 j$ and $q$ is an integer greater than or equal to $p$.  Then $q < k$ since $n < 3k$. We have $ n - k+1   \geq  2.5 k - k +1 = 1.5 k + 1 >  1.5 k> k$. Now we  apply \lemref{new_lem_lash4} for driving
%Since $n < 3k$, we have $q < k$ otherwise $n -2j \geq 3k$ for all $0 \leq j \leq k-1$ since $p \geq 3$. Thus we have $P((n - 2j)) < k$ for all $0 \leq j \leq k-1$. Since $ n - k+1   \geq  2.5 k - k +1 = 1.5 k + 1 >  1.5 k> k$, we can apply \lemref{new_lem_lash4},
\begin{equation*}
P \big( \Delta \big) > 2.\frac{(k-1)}{2} = k-1.
\end{equation*}
Thus $ P \Big( (n - \delta-2j_0) \Big)  \geq k $ for some  term which is a contradiction to $q < k$.  Now we may assume that  one of the terms  in $n(n-1)\dots (n-k+1)$ is a prime number $p_0$. 
Then by \lemref{lem_1}, $p_0$ divides $_s(n+s)$ as $p_0 > 1.5 k > s$ for $k \geq 62$. Thus  we get, $1.5 k < p_0 < s + k$ for $k \geq 62$.  Then $0.5 k < s \leq 92$ which  gives $ k < 184$.  By combining \lemref{lem_1} and \lemref{lem_3},  we find that  $G(x)$ does not have a factor of degree $2 \leq k < 184$ for all  $2.5 k \leq n < 3k$ except for $(n,k,s) \in S_3$ where 
\begin{equation}\label{eq_excep_2}
 S_3 = \{ ( 8,3, 6), (8,3,13) \}.
\end{equation}

Let $n \in [2k +93,  2.5 k)$. Then $k > 186$.   We put 
$$
\theta =  \begin{cases}
                  \frac{1}{2} &\text{if $k$ is odd,}\\
                  1 &\text{if $k$ is even}
                  \end{cases}
$$
and $K : = 1.5k + \theta > 279 $ is an integer satisfying $n - k +1 \leq 2.5 k - \theta-k +1 = K$.
Then $[n-k+1, n] \supseteq [K, 2k+93] \supseteq [K, \frac{2}{1.5}K] \supseteq [K, 1.064286 K]$ and a prime number $q_0 \in  [K, 1.064286 K]$ by \lemref{lem_pnt}(b). Thus there exists a prime $q_0 \in [n-k+1, n] $ for $k > 186$. Since $q_0 \geq n -k +1 \geq 2k +93 -k +1 = k+94 > k$ and $k+94  > 280> s$ since $k > 186$ and $s \leq 92$, we derive from \lemref{lem_1} that $q_0 | (n+1)\dots (n+s)$. Thus $k + 94 \leq q_0 < s+k \leq 92+k$ and this is a contradiction. 

Let $n \in [2k, 2k +92]$ with $k > 105$. Then we have $n - k+1 \leq 2k +92 -k+1 = k+93 $ and $2k \leq n$. Therefore the interval $[k +93, 2k]$  is contained in $[n-k+1, n]$. By \lemref{lem_prime}(b)  there exists a prime $p_0$  in $[k +93, 2k]$.  This implies that $p_0 =n -i$ for some $0 \leq i \leq k-1$. Further $p_0 \geq n-k+1 \geq 2k-k+1 = k+1 > 106 > s$.  Now by \lemref{lem_1}, $p_0$ divides  $p_0 | {}_s(n+s)$ since $p_0 > s$. Thus we get $k+93 \leq p_0 < s+k $ which is a contradiction. Now consider $n \in [2k, 2k +92]$ with $k \leq 105$. We have $2k +92 \leq 4k$ which implies that $k \geq 46$.
We have $n \in [2k, 2k +92]$ with $46 \leq k \leq 105$ and for $2 \leq k \leq 45$ we have $n \in [2k,4k]$.  By combining \lemref{lem_1} and  \lemref{lem_3} we conclude that $G(x)$ does not have a factor of degree $k$ except for $(n, k, s) \in S_4$ where
\begin{align}\label{eq_excep_3}
S_4 & =  \{ (4,2,2),(4,2,7),(4,2,23),(4,2,47),(6,2,4),(6,2,14),(6,2,79),
(8,2,6),(8,2,13),  \\ & \nonumber  (8,2,48),(8,2,62),(6,3,47),(8,3,6),(8,3,13),
(9,3,5),(9,3,6),  (9,3,13),(9,3,47),  \\ & \nonumber (9,4,5),(9,4,6),(16,4,12),(16,4,62),(10,5,4),(10,5,5)  \nonumber\}.
\end{align} 
\noindent
Combining \eqref{eq_excep_1}, \eqref{eq_excep_2} and \eqref{eq_excep_3} and deleting triples given by \lemref{lem_dum_exp}, we get a complete set of exceptions stated in \lemref{lem2k_le_n_le_4k}.
\end{proof}

\begin{lem}\label{lem_s>9}
Assume that $G(x)$ has a factor of degree $k \geq 2$. Then $s > 9$ unless
\begin{align*}\label{exceptions_1}
 (n, k, s ) \in\{& (4,2,7),(10,5,4) \nonumber\}.
\end{align*}
\end{lem}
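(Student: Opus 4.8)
The plan is to leverage the two previously established case-lemmas by partitioning on the size of $n$ relative to $k$. Since the hypothesis of the lemma (together with the standing convention $2 \le k \le n/2$) forces $n \ge 2k$, exactly one of the two ranges $2k \le n \le 4k$ and $n > 4k$ occurs. I would argue by contradiction: assume $s \le 9$ and show this forces $(n,k,s) \in \{(4,2,7),(10,5,4)\}$, which is logically equivalent to the stated conclusion.

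In the range $2k \le n \le 4k$ the work is essentially finished by \lemref{lem2k_le_n_le_4k}. Since $1 \le s \le 9 \le 92$, that lemma yields $(n,k,s) \in \{(4,2,7),(4,2,23),(9,3,47),(10,5,4)\}$; discarding the two triples $(4,2,23)$ and $(9,3,47)$, which have $s > 9$, leaves exactly the two admissible exceptions $(4,2,7)$ and $(10,5,4)$.

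For $n > 4k$ I would split once more according to whether $(n,k) \in T$, with $T$ as in \lemref{new_lem_1}. If $(n,k) \in T$, then \lemref{lem_s>9_n>4k} gives $s > 9$ directly, with no exceptions. If $(n,k) \notin T$, then \lemref{new_lem_1} supplies a prime $p > 4.42k$ dividing ${}_k(n) = n(n-1)\cdots(n-k+1)$. As $k \ge 2$ we get $p > 8.84$, hence $p \ge 11 > 9 \ge s$; in particular $p > k$, so \lemref{lem_1} (with $a=0$) applies and gives $p \mid \pi_0\pi_n$. Because $|\pi_n| = 1$ and $|b_0| = 1$, this reduces to $p \mid a_0 = \frac{(n+1)\cdots(n+s)}{s!}$, and since $p > s$ forces $p \nmid s!$, we obtain $p \mid (n+j)$ for some $1 \le j \le s$. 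On the other hand $p \mid (n-i)$ for some $0 \le i \le k-1$, so $p \mid (i+j)$ with $0 < i+j \le (k-1)+s \le k+8$. Therefore $p \le k+8$, which together with $p > 4.42k$ gives $3.42k < 8$, forcing $k = 2$; but then $p$ would have to satisfy $8.84 < p \le 10$, a window containing no prime, a contradiction.

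The only genuinely delicate point is this last sub-case: one must check that the two divisibility conditions really combine to bound $p$ by $i+j$, and that the numerical slack $4.42k < k+8$ is tight enough to eliminate every $k \ge 2$. The margin is smallest at $k = 2$, where the bound $s \le 9$ is exactly what collapses the interval $(8.84,\,11)$ to contain no prime; a weaker bound on $s$ would not close the gap, so I would verify this threshold carefully. Everything else is bookkeeping: assembling the three sub-cases and confirming that the surviving possibilities are precisely the two advertised triples.
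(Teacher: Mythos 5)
Your proof is correct and follows essentially the same route as the paper: the same split into $2k \le n \le 4k$ (handled by \lemref{lem2k_le_n_le_4k}) and $n > 4k$ (handled by \lemref{lem_s>9_n>4k} for $(n,k) \in T$ and by \lemref{new_lem_1} combined with \lemref{lem_1} otherwise). The only cosmetic difference is that you derive $k=2$ from the inequality $3.42k < 8$ and then rule it out, whereas the paper treats $k=2$ and $k>2$ as separate sub-cases from the start; the arithmetic is identical.
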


\begin{proof}
Assume that $s \leq 9$.  Suppose that $G(x)$  has a factor of degree $k \geq 2$. Let  $ n > 4k $. By \lemref{lem_s>9_n>4k} we may assume that $(n,k) \not\in T$. Let $k=2$. By \lemref{new_lem_1}, we have 
$$
p = P({}_k(n)) > 4.42 k  \hspace*{.5cm} \text{for} \hspace*{.5cm} n > 4k.
$$
Therefore $p \geq 11$ and $p | (n-i)$ for some $i$ with $0 \leq i \leq k-1$. By \lemref{lem_1}, $p | (n+j)$ for some $j$ with $1\leq j \leq s$ since $p \geq 11 > s$. Then
$$
11 \leq p \leq i+j < s+k \leq 9+ 2
$$
which is a contradiction. Let $k >2$. Then $p > 4.42 k$ by \lemref{new_lem_1} and as above 
$$
4.42 k < p \leq i+j < s+k 
$$
which implies that 
\begin{equation}\label{eq_new_3.42k}
3.42 k < s.
\end{equation}
This is a contradiction since $s \leq 9$.
If $2k \leq n \leq 4k$, the assertion follows from \lemref{lem2k_le_n_le_4k} and $s \in [1,9]$.
\end{proof}

\begin{lem}\label{lem_n<100}
Assume that $G(x)$ has a factor of degree $k \geq 2$.
\begin{itemize}
\item[(a)] Let $2k \leq n \leq 300$ with $2 \leq k \leq 26$. Let $10 \leq s \leq 92$ and $P({}_k(n)) < 100$. Then we have the following triples $(n,k,s)$
\begin{align*}
 \{(4,2,23), (9,2,19), (9,2,47), (16, 2, 14), (16, 2, 34), (16, 2, 89), (9, 3, 47), (16, 3, 19)  \}.
\end{align*}

\item[(b)] Let $n > 300$, $k \in [2,26]$, $P({}_k(n)) < 100$ and $10 \leq s \leq 92$.  Then we have no triples.
\end{itemize}
\end{lem}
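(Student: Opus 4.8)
The plan is to reduce the statement to a finite, smoothness-governed computation and then run the argument \emph{combining \lemref{lem_1} and \lemref{lem_3}} on the resulting list. First I would dispose of all $k > 9$ at once: by \lemref{lem_new_4} the set $\{ x : P({}_k(x)) < 100 \}$ is empty for $k > 9$, so the hypothesis $P({}_k(n)) < 100$ can never be met and there is nothing to prove for $10 \leq k \leq 26$ in either part. This reduces both parts to $2 \leq k \leq 9$.

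Next I would treat $k = 9$ explicitly. By \lemref{lem_new_4} the only admissible value is $n = 292$, and since $292 \leq 300$ it contributes only to part (a) and leaves part (b) vacuous for $k = 9$. For part (a) I would apply \lemref{lem_1} with the prime $p = 97$: one has $97 \mid 291 \mid {}_9(292)$ and $97 > k = 9$, while $97 > s$ for every $s \leq 92$ and no multiple of $97$ lies in $[293, 292 + s]$ for such $s$. Hence $97 \nmid a_0 = \frac{(n+1)\cdots(n+s)}{s!}$, so by \lemref{lem_1} the polynomial $G(x)$ has no factor of degree $9$. Thus $(292,9,s)$ never survives, consistent with the fact that the asserted list contains only $k \in \{2,3\}$.

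The remaining range $2 \leq k \leq 8$ is where the real work lies. Here I would invoke the tables of Luca and Najman \cite{LN} to list, for each such $k$, the finite explicit set of $n$ for which ${}_k(n)$ is $97$-smooth, i.e.\ $P({}_k(n)) < 100$. Splitting this list into $n \leq 300$ (part (a), also imposing $n \geq 2k$) and $n > 300$ (part (b)), I would then run through each pair $(n,k)$ and each $s \in [10,92]$ and decide whether $G(x)$ can have a factor of degree $k$ exactly as in \lemref{lem_s>9_n>4k} and \lemref{lem2k_le_n_le_4k}: first search for a prime $p$ with $k < p < 100$ dividing ${}_k(n)$ but not dividing $a_0$, which rules out a degree-$k$ factor by \lemref{lem_1}; and when no such prime exists, fall back to the Newton polygon of $g_1(x) = n!\,g(x)$ with respect to a suitable prime and verify that its right-most edge has slope less than $1/k$, so that \lemref{lem_3} applies for all admissible patterns $b_j$. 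The surviving triples should be precisely the eight listed in part (a), with none having $n > 300$, which is part (b).

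The main obstacle is the completeness and sheer size of this finite search. Correctness hinges on the Luca--Najman enumeration of $97$-smooth values of ${}_k(n)$ being exhaustive, and on the case analysis being carried to the end for every $(n,k,s)$ with $10 \leq s \leq 92$. The genuinely delicate cases are those in which \lemref{lem_1} fails, i.e.\ where every prime $p > k$ dividing ${}_k(n)$ also divides $a_0$; for these one must actually compute the Newton polygon and check the slope condition, as was illustrated for $(9,3,12)$ in the proof of \lemref{lem2k_le_n_le_4k}. This is precisely the computation carried out in SAGE, and the threshold $n = 300$ is chosen so that all surviving triples fall below it, leaving part (b) empty.
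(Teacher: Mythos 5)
Your skeleton is the right one and largely matches the paper's: \lemref{lem_new_4} disposes of $k>9$; the case $k=9$ reduces to $n=292$ and is killed by \lemref{lem_1} with $p=97$ (correct, since $97\mid 291$ and no multiple of $97$ lies in $(292,384]$); and for $2\le k\le 8$ the Luca--Najman data make the search finite --- for $n>300$ the paper extracts explicit sets $D_2,\dots,D_8$ from the odd solutions of $P(x^2-1)<100$ via $4n(n-1)=(2n-1)^2-1$. The genuine gap is in how you finish the search. The sieve you describe --- \lemref{lem_1}, then Filaseta's \lemref{lem_3} when the right-most edge of the Newton polygon of $g_1$ has slope less than $1/k$ --- is exactly what the paper calls \emph{combining \lemref{lem_1} and \lemref{lem_3}}, and in part (a) it does \emph{not} cut the list down to eight triples: it leaves the $38$-element set $\Omega_1$ displayed in the paper's proof. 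The remaining $30$ triples (for instance $(4,2,47)$, $(9,2,26)$, $(6,3,47)$, $(16,4,12)$) are precisely those where the slope condition of \lemref{lem_3} fails, and the paper eliminates them with a third tool you never invoke: Dumas' \lemref{lem_dumus}, packaged as \lemref{lem_dum_exp}, which analyses how the Newton polygon of a putative factor of $G_1$ must be assembled from translates of edges (e.g.\ for $(6,3,47)$ the single edge from $(0,0)$ to $(6,7)$ has slope $7/6>1/3$, so \lemref{lem_3} is useless, but the absence of lattice points forces irreducibility). Without that step your assertion that ``the surviving triples should be precisely the eight listed'' is unsupported.

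A second gap concerns part (b). Several elements of $D_2$ are astronomically large (of order $10^{17}$), so when \lemref{lem_1} fails for such an $n$ your proposed fallback --- computing the Newton polygon of $g_1(x)=n!\,g(x)$, a polynomial of degree $n$ --- is not a feasible computation and is not what the paper does. The paper instead invokes the valuation argument of \lemref{lem_s>p^r}: if $p>k=2$ and $p^r\mid n-\delta$ with $\delta\in\{0,1\}$, then a degree-$2$ factor forces $s\ge p^r-\delta$, which for $s\le 92$ eliminates these large $n$ directly from the factorizations of $n$ and $n-1$. You need this lemma, or an equivalent $p$-adic argument on $\nu_p(a_0)$, to close part (b).
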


\begin{proof}
\begin{itemize} 
\item[(a)]  By combining \lemref{lem_1} and \lemref{lem_3} with $10 \leq s \leq 92$ and $P({}_k(n)) < 100$,  we get $\Omega_1$ where
\begin{align*}
\Omega_1 = \{ & (4,2,23),(4,2,47),(6,2,14), (6,2,79), (8,2,13),(8,2,48),(8,2,62),  (9,2,19),(9,2,26), \\ &  (9,2,47), (12,2,43), (16,2,14),(16,2,19),(16,2,24),(16,2,34), (16,2,79), (16,2,89),     \nonumber\\ &  (18,2,16),  (24,2,22),  (32,2,30),   (48,2,46),(54,2,52),(64,2,62),  (72,2,70),  (6,3,47), \nonumber\\ & (8,3,13),(9,3,13),  (9,3,47), (16,3,12), (16,3,13), (16,3,19), (16,3,62), (18,3,33), \nonumber\\ & (36,3,15),  (16,4,12),  (16,4,62),  (28,4,24),(81,4,77)\nonumber \}.
\end{align*}
Then $\Omega_1 \setminus \Omega$ is the triples of \lemref{lem_n<100} (a). 

\item[(b)] 
Define
\begin{equation*}
D_k : =\{ n> 300 | P({}_k(n)) < \min\{s+k, 100\} \}. 
\end{equation*}
 Let $2 \leq k \leq 26$ and $10 \leq s \leq 92$. For the solutions of  $P({}_2(n)) < 100$, we use the Table given in \cite{LN}. Since
$$
4n(n-1)= 4\Big\{ \big(n-\frac{1}{2} \big)^2-\frac{1}{4} \Big\} = (2n-1)^2-1,
$$
it is enough to consider the odd solutions of $x^2 -1$ attached in the \textit{Homepage of Filip Najman/ Publications }.  By \lemref{lem_new_4}, we have $k \leq 8$  and $D_k = \emptyset$ for $ k > 8$. We have $D_2$ for $n > 300$ obtained from all the odd solutions of $x^2 - 1$, see \cite{LN}.  Further we have
 \begin{align*}
 D_3=\{& 36519,17577,17459,6785,5831,5291,4901,3887,3871,3551,3485,3479,2925,\\ & 2279,2211,2015,1887,1769,1681,1519,1421,1377,1333,1275,1273,1241,1221,\\ & 1161,1025,989,945,901,869,847,805,783,781,737,731,639,611,551,533,531,\\ &529,497,495,475,425,407,377,371,365,343 \} \\
 D_4=\{& 5831,4901,3479,2211,1333,1275,1221,1025,989,783,731,639,611,533,531,\\ & 497,495,475,377,371,343 \}\\
 D_5= \{ & 1275,783,533,531,497 \}, 
 D_6= \{  4901,533,497 \}, D_7= \{ 533\}, D_8 = \emptyset.
 \end{align*}
 By combining \lemref{lem_1} and \lemref{lem_3} and also using \lemref{lem_s>p^r} for $k=2$, we get no factor of degree $k$. We can see for $(n,k,s) = (332110803172167361,2,92)$. We have the factorisation of $n= 2521 * 187177 * 703813633$. By \lemref{lem_s>p^r}, we get a contradiction for $p = 2521$. Consider  $(n,k,s) = (3939649,2,92)$. We have $n = 7^2 * 37 * 41 * 53$ whereas $n-1 = 2^6 * 3 * 17^2 * 71$.  By \lemref{lem_s>p^r},  get a contradiction for $p = 17$. The exceptions $(n,k,s) \in (134849,2,s)$ for $86 \leq s \leq 92$ are excluded by \lemref{lem_1} with $p=41$.
\end{itemize}
\end{proof}

\iffalse
\begin{lem}\label{lem_1.9}
Let $2 \leq k \leq n/2$. Assume that $G(x)$ has a factor of degree $k \geq 2$. Then 
$s > 1.9k $ unless $(n, k, s ) \in \{ (4,2,7),  (10,5,4)\}$.
\end{lem}

\begin{proof}
By \lemref{lem_s>9}, we have $93 \leq s \leq 1.9 k$ which implies that $k \geq 49$. We get the listed exceptions by \lemref{lem_s>9}.
Let $n > 4k$ and  $(n,k) \not\in T$.  We have $p_0 = P({_k}(n)) > 4.42 k > s$ by \lemref{new_lem_1}.  By \lemref{lem_1}, we have $p_0 | {}_s(n+s)$. As in \lemref{lem_s>9}, we get $3.42k < s$   contradicts to $s \leq 1.9 k $. Let $(n,k) \in T$.   We have no exceptions since $k \geq 49$.

Let $2k \leq n \leq 4k$.  Suppose $G(x)$ has a factor of degree $k \geq 2$. By applying  \lemref{lem_ShSi}  with $\phi = \frac{1}{40}$ and \lemref{lem_prime}(a) with  $\theta =\frac{1}{39}$, we get
$$p \in (m,1.025622m) \subset (m, 1.0256410m)$$  and
$$
s > 2 \Big(1-\frac{2}{40}  \Big)k  = 1.9  k \quad \text{for} \quad n \geq 821
$$
which is a contradiction. Now we have $n \leq 820$ with $2k \leq n \leq 4k$. This gives $49 \leq k$ and $k \leq 205$ since $4k > 820$ for $k > 205$ and $206 \leq k \leq 410$ with $n \in [2k,820]$. Thus we have no triplet $(n,k,s)$ satisfying \lemref{lem_1}.
\end{proof}\fi

\section{\bf{Proof of \thmref{thm_ShSi} }  }
\begin{enumerate}

\item[(a)]  Assume that the assumptions of \thmref{thm_ShSi} are satisfied and let $(n,k,s) \neq (10,5,4)$. We may assume that $(n,k,s) \neq (4,2,7)$ otherwise the assertion follows.
Let $n > 4k$ and  $(n,k) \not\in T$.  We have $p_0 = P({_k}(n)) > 4.42 k > s$ by \lemref{new_lem_1}.  By \lemref{lem_1}, we have $p_0 | {}_s(n+s)$. As in \eqref{eq_new_3.42k}, we get $3.42k < s$ which  contradicts $s \leq 1.9 k $. Let $(n,k) \in T$.  Then  we have
$$
(n,k) \in \{ (29,7), (30,7) \} \quad \text{with} \quad 10 \leq s \leq 12
$$
since $k \geq 6$. In the following Table \ref{tab;table_k=7} we have included a prime $p$ with which \lemref{lem_1} does not hold.

\begin{table}[ ! h]
  \centering
  \caption{ For $ (n, k) \in  \{(29,7), (30,7) \}$  not satisfying \lemref{lem_1}. }
\label{tab;table_k=7}
 \begin{adjustbox}{width=\textwidth}
\begin{tabular}{|c|c|c||c|c|c||c|c|c||c|c|c||c|c|c||c|c|c|}
\hline
$n$ &  $s$ & $p$ &$n$ &   $s$ & $p$ & $n$ &  $s$ & $p$ &$n$ &  $s$ & $p$ &  $n$ & $s$ & $p$&  $n$ &  $s$ & $p$ \\
\hline
$29$ & $10$ & $23$& $29$ & $11$ & $23$& $29$ & $12$ & $23$& $30$ & $10$ & $29$& $30$ &  $11$ & $29$& $30$ &  $12$ & $29$ \\ \hline
\end{tabular}
\end{adjustbox}
\end{table}

It remains to consider $2k \leq n \leq 4k$.  By applying  \lemref{lem_ShSi}  with $\phi = \frac{1}{40}$ and \lemref{lem_prime}(a) with  $\theta =\frac{1}{39}$, we get
$$
s > 2 \Big(1-\frac{2}{40}  \Big)k  = 1.9  k \quad \text{for} \quad n \geq 821
$$
which is a contradiction. Now we have $n \leq 820$ with $2k \leq n \leq 4k$.  For $6 \leq k \leq 410$, we have $2k \leq n \leq \min \{820, 4k \}$. 
For a fixed $k_0$ with $6 \leq k_0 \leq 410$,  we have a range for $n \in [2k_0, 4k_0]$ with $n \leq \min \{820, 4k_0 \}$. Now for fixed $n_0 \in [2k_0, 4k_0]$ with $n_0 \leq \min \{820, 4k_0 \}$, we calculate the prime factors of $n_0(n_0-1) \dots (n_0-k_0+1)$ and the prime factors of  $\frac{(n_0+1) \dots (n_0+s)}{s!} $ for each $s \in [10, 1.9 k_0]$. If there exist a prime $p_0$ such that $p_0 | n_0(n_0-1) \dots (n_0-k_0+1)$ but $p_{0} \nmid \frac{(n_0+1) \dots (n_0+s_0)}{s_0!} $ for  some $s_0$, then by \lemref{lem_1},  $G(x)$ with respect to the given $n_0$ and $s_0$ does not have a factor of degree $k_0$ otherwise we apply \lemref{lem_3} to $(n_0, k_0, s_0)$. For a fixed $k \geq 6$, we check that $G(x)$ for any $ n$ and any $s$ in their respective ranges, does not have a factor of degree $k$.

\item[(b)] By \lemref{lem_ShSi} with $\phi = \frac{1}{1001}$ and \lemref{lem_prime}(a) with $ \theta = \frac{1}{1000}$, we get
$$
s > 2 \Big(1-\frac{2}{1001}  \Big)k  = \frac{1998}{1001}k >  1.99  k \quad \text{for} \quad n \geq  48732, n \leq 4k.
$$
Let $n > 4k$. Let $(n,k) \not \in T$.  Assume that $s \leq 2k$. By \eqref{eq_new_3.42k}, we get $3.42k < s$ which is a contradiction to $s \leq 2k$. Thus $s > 2k$ and this gives \eqref{eq_new_s}. Let $(n,k) \in T$. Then $k \leq 7$ by \lemref{new_lem_1}. Assume that $1 \leq s \leq 2k$.  Then we check that $(n,k,s)$ does not satisfy \lemref{lem_1} for every $1 \leq s \leq 2k$. Therefore $s >2k$ and hence \eqref{eq_new_s} holds.
\end{enumerate}

\section{\bf{Proof of \thmref{thm_ShSi2_new} }}  
Let $s \leq 92$. Assume that $G(x)$ has a factor of degree $k \geq 2$.  By \lemref{lem_s>9}, it suffices to show  that $(n,k,s) \in \Omega_2 $ for $10 \leq s \leq 92$. We have $s > 1.9 k$ by \thmref{thm_ShSi}. Therefore  $ k < \frac{92}{1.9}$. This gives $k \leq 48$.

Let $n > 4k$  and $(n,k) \in T$. Then  $k \leq 7$ by \lemref{new_lem_1}.  By combining \lemref{lem_1} and \lemref{lem_3}, we check that $G(x)$ does not have a factor of degree $k$ for all pairs in $T$.  Let $(n,k) \not \in T$. Now by \lemref{new_lem_1} and \lemref{lem_1}, we have 
\begin{equation*}\label{eq_thm3}
4.42k < s+k \leq  92 + k
\end{equation*}
which implies that $ k \leq 26$. Thus  we have $2 \leq k \leq 26$ with $p=P({}_k(n)) < s+k$  by \lemref{lem_1} which implies that $p < 100$ since $s \leq 92$ and $k \leq 9$ by \lemref{lem_new_4}. Now, we see that the assumption of \lemref{lem_n<100} are satisfied and hence the assertion for $n > 4k$ follows.

If $2k \leq n \leq 4k$, The assertion follows by \lemref{lem2k_le_n_le_4k}.

\end{document}